\newtheorem{thm}{Theorem}[section]
\newtheorem{prop}[thm]{Proposition}
\newtheorem{cor}[thm]{Corollary}
\theoremstyle{definition}
\newtheorem{dfn}[thm]{Definition}
\newtheorem{example}[thm]{Example}
\theoremstyle{remark}
\def\Correspondingauthor{$^{*}$\protect\footnotetext{$^{*}$ C\lowercase{orresponding author.}}}
\numberwithin{equation}{section}
\begin{document}

\title[Generalized Bessel multipliers]%
{Generalized Bessel multipliers in Hilbert spaces}

\author[Abbaspour, Hossein-nezhad, Rahimi]{GH. ABBASPOUR TABADKAN\Correspondingauthor, H. Hossein-nezhad and A. Rahimi}

\address{School of Mathematics and Computer Sciences,
Damghan University, Damghan, Iran.}
\email{abbaspour@du.ac.ir; gabbaspour@yahoo.com }
\email{h.hosseinnezhad@std.du.ac.ir; hosseinnezhad\_h@yahoo.com}
\address{Department of Mathematics, University of Maragheh, P. O. Box 55136-553, Maragheh,Iran.}
\email{rahimi@maragheh.ac.ir}

\subjclass[2010]{Primary 42C15; Secondary  46C05, 47A05}
\keywords{Bessel sequence; Frame; Bessel multipliers; Riesz basis.}

\date{\today}

\begin{abstract}
The notation of generalized Bessel multipliers is obtained by a bounded operator on $\ell^2$
which is inserted between the analysis and synthesis operators.
We show that various properties of generalized multipliers
are closely related to their parameters, in particular it will be shown that the membership of
generalized Bessel multiplier in the certain operator classes requires that its symbol belongs in the same classes, in special sense.
Also, we give some examples to illustrate our results.
As we shall see, generalized multipliers associated with Riesz bases are well-behaved, more precisely
in this case multipliers can be easily composed and inverted.
Special attention is devoted to the study of invertible generalized multipliers.
Sufficient and/or necessary conditions for invertibility are determined.
Finally, the behavior of these operators under
perturbations is discussed.
\end{abstract}

\maketitle

\section{Introduction}
\label{sec:intro}
Frames for Hilbert spaces were introduced in 1952 by Duffin and Schaefer \cite{R.J. Duffin and A.C. Schaeffer-1952}
in the context of nonharmonic Fourier series.
Later, Daubechies,
Grossmann and Meyer \cite{Meyer-1986} found a new fundamental application to wavelet and Gabor
transforms in which frames played an important role.
Moreover, frames are main tools for signal and image processing \cite{Bolcskei-1998, Vetterli-1995, Shen-2006}, data compression \cite{Casazza-2003}, sampling theory \cite{Aldroubi-2001}, optics \cite[Ch.14]{Feichtinger and Strohmer-1998}, signal detection \cite[Ch.11]{Feichtinger and Strohmer-1998}, filter banks \cite{Benedetto-1998}, etc.

Several notions generalizing the concept of frames
have been introduced and studied, namely; Banach frames, pseudo frames,
fusion frames (or frames of subspaces), G -frames and etc. See for example
\cite{Casazza-Larson-1999,
Li-Ogawa-2004,
Casazza-Kutyniok-Li-2008,
Wenchang-2006}.

Although frames are useful tools in applications and theory, there are many systems that do not satisfy both frame conditions
at the same time, therefore the concepts semi frames \cite{JAntoine-Balazs} and reproducing pairs \cite{Speckbacher-Balazs, JAntoine} have also been recently introduced.

Bessel multipliers in Hilbert spaces were introduced by Balazs in \cite{Balazs-2007}.
Bessel multipliers are operators that are defined by a fixed multiplication pattern which is inserted between the analysis and synthesis operators.
We refer to \cite{Balazs-2007} for an introduction to the concept of Bessel multipliers and theirs properties.
Reproducing pairs are closely related to frame multipliers \cite{Balazs-2007, Balazs-Rahimi. Invertibility is
a central topic in the mathematical investigation of multipliers.
This includes the question under which conditions a system of two sequences
forms a reproducing pair.}

The standard matrix representation of operators using an orthonormal basis was presented in \cite{J.B. Conway} and
it has been generalized in several directions. One of the recent directions of such generalizations is investigated by Balazs \cite{Balazs-2008}
by using the Bessel sequences, frames and Riesz bases. In the same paper, the author also established the function which assigns an
operator in $\mathcal{B}(\mathcal{H}_1,\mathcal{H}_2)$ to an infinite matrix in $\mathcal{B}(\ell^2)$. The last concept is a generalization of Bessel multiplier as introduced in \cite{Balazs-2009}.

In this paper, in addition to recalling some results from \cite{Balazs-2008, Balazs-2009}, we also derive new original results.
Particularly, we interested to study the possibility of invertibility of generalized multipliers depending on the properties of its
corresponding sequences and its symbol. Finally,
we investigate the behavior of generalized Bessel multipliers when the parameters are changing.

\section{Notation and preliminaries}\label{sec2}
Throughout the paper, $\mathcal{H}$ will denote a separable Hilbert space and $\mathcal{B}(\mathcal{H}_1,\mathcal{H}_2)$
is the Banach space of all bounded linear operators from $\mathcal{H}_1$ to $\mathcal{H}_2$ with operator norm.
At the first, we recall some definitions.\\
A countable family of elements $\{f_k\}_{k=1}^{\infty}$ in $\mathcal{H}$ is a
\begin{enumerate}
\item \emph{Bessel sequence} if there exists a constant $B>0$ such that
   \begin{equation}\label{eqn:Bessel intro}
   \sum_{k=1}^{\infty}|\langle f,f_{k}\rangle|^2\leq B\| f\|^2,\hspace{4mm}  (f\in \mathcal{H}),
   \end{equation}
\item \emph{frame} for $\mathcal{H}$ if there exist constants $A, B>0$ such that
    \begin{equation}\label{eqn:frame intro}
  A\| f\|^2\leq \sum_{k=1}^{\infty}|\langle f,f_{k}\rangle|^2\leq B\| f\|^2,\hspace{4mm} (f\in \mathcal{H}),
    \end{equation}
the numbers $A$, $B$ in  \eqref{eqn:frame intro} are called frame bounds,
  \item \emph{Riesz basis} for $\mathcal{H}$ if $\overline{span}\{f_k\}_{k=1}^{\infty}$ = $\mathcal{H}$ and there exist constants $A,B>0$ such that
   \begin{equation}\label{eqn:Riesz intro}
   A \sum_{k=1}^{\infty}|c_{k}|^{2}\leq\|\sum_{k=1}^{\infty}c_{k}f_{k}\|^{2}\leq B\sum_{k=1}^{\infty}|c_{k}|^{2},
   \end{equation}
   for every finite scalar $\{c_{k}\}\in \ell^2$.
\end{enumerate}
Every orthonormal basis is a Riesz basis, and every Riesz basis is a frame (the bounds coincide).
The difference between a Riesz basis and a frame is that the elements in a frame might be dependent. More precisely, a frame $\{f_k\}_{k=1}^{\infty}$
is a Riesz basis if and only if
\begin{equation}\label{eqn:difference between a Riesz basis and a frame}
  \sum_{k} c_{k}f_{k}=0,\hspace{2mm}\{c_{k}\}\in \ell^2(\mathbb{N})\Rightarrow c_{k}=0 \hspace{3mm} (k\in \mathbb{N}).
\end{equation}
A frame that is not a Riesz basis is said to be \emph{overcomplete}.
The option of having overcompleteness in a frame makes
the concept more flexible than that of a basis: we have more freedom, which
enhances the chance that we can construct systems having prescribed properties.
The overcompleteness is also useful in practice, e.g., in the context of signal transmission. More details can be found in \cite{Christensen-2016}.



We denote the synthesis operator and the analysis operator associated to a Bessel sequence $\{f_k\}_{k=1}^{\infty}$ by $\mathcal{D}_{\{f_{k}\}}$ and $\mathcal{C}_{\{f_{k}\}}$, respectively, which are
defined as follow
\begin{equation}\label{eqn:syn op}
\mathcal{D}_{\{f_{k}\}}:\ell^2(\mathbb{N})\rightarrow \mathcal{H},\hspace{3mm} \mathcal{D}_{\{f_{k}\}}(\{c_k\})=\sum_{k}c_{k}f_{k},
\end{equation}
\begin{equation}\label{eqn:anal op}
\mathcal{C}_{\{f_{k}\}}:\mathcal{H} \rightarrow \ell^2(\mathbb{N}),\hspace{3mm} \mathcal{C}_{\{f_{k}\}}(f)= \{\langle f,f_{k}\rangle\}_{k}.
\end{equation}
Composing $\mathcal{D}_{\{f_{k}\}}$ and $\mathcal{C}_{\{f_{k}\}}$, we obtain the (associated) frame operator
\begin{equation}\label{eqn:frame op}
 \mathcal{S}_{\{f_{k}\}}:\mathcal{H} \rightarrow \mathcal{H},\hspace{3mm} \mathcal{S}_{\{f_{k}\}}(f)= \sum_{k}\langle f,f_{k}\rangle f_{k}.
\end{equation}

We state some of the important properties of the mentioned operators; proofs can be found in \cite{Christensen-2016}.
The index set will be omitted in the following, if no distinction is necessary.
\begin{prop}[\cite{Christensen-2016}]\label{Bessel bounds}
Let $\{f_k\}_{k=1}^{\infty}$ be a Bessel sequence for $\mathcal{H}$. Then $\| f_{k}\|\leq \sqrt{B}$ and the operators
$\mathcal{D}$ and $\mathcal{C}$ are adjoint to each other. Moreover $\|\mathcal{D}\|_{op} =\|\mathcal{C}\|_{op}\leq\sqrt{B}$.
\end{prop}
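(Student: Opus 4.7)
The plan is to work the three claims in an order that lets each one support the next, starting from the Bessel inequality itself.

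First I would establish the pointwise bound $\|f_k\|\le\sqrt{B}$ by the standard trick of substituting $f=f_k$ into the defining inequality \eqref{eqn:Bessel intro}. Since $|\langle f_k,f_k\rangle|^2=\|f_k\|^4$ is just one of the nonnegative terms on the left-hand side, it is dominated by the whole sum, which in turn is at most $B\|f_k\|^2$; dividing by $\|f_k\|^2$ (the case $f_k=0$ being trivial) yields the claim.

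Next I would handle $\mathcal{C}$ directly: the Bessel inequality reads exactly $\|\mathcal{C}f\|_{\ell^2}^2\le B\|f\|^2$, so $\mathcal{C}$ is a well-defined bounded operator on $\mathcal{H}$ with $\|\mathcal{C}\|_{op}\le\sqrt{B}$. The delicate point is that the definition \eqref{eqn:syn op} of $\mathcal{D}$ presupposes convergence of $\sum_k c_k f_k$ for every $\{c_k\}\in\ell^2$, which is not a priori obvious. To get around this, I would first define $\mathcal{D}_0$ on the dense subspace $c_{00}$ of finitely supported sequences, and for such $\{c_k\}$ and any $g\in\mathcal{H}$ compute
\begin{equation*}
\langle \mathcal{D}_0(\{c_k\}),g\rangle=\Big\langle\sum_k c_k f_k,g\Big\rangle=\sum_k c_k\overline{\langle g,f_k\rangle}=\langle\{c_k\},\mathcal{C}g\rangle_{\ell^2}.
\end{equation*}
This identity identifies $\mathcal{D}_0$ with the restriction of $\mathcal{C}^*$ to $c_{00}$.

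Since $\mathcal{C}^*$ is bounded with $\|\mathcal{C}^*\|_{op}=\|\mathcal{C}\|_{op}\le\sqrt{B}$, it follows by density of $c_{00}$ in $\ell^2$ that $\mathcal{D}_0$ extends by continuity to a bounded operator on all of $\ell^2$ that coincides with $\mathcal{C}^*$; in particular $\sum_k c_k f_k$ converges in $\mathcal{H}$ for every $\{c_k\}\in\ell^2$, so the extension is indeed the operator $\mathcal{D}$ of \eqref{eqn:syn op}. Therefore $\mathcal{D}=\mathcal{C}^*$, whence $\|\mathcal{D}\|_{op}=\|\mathcal{C}\|_{op}\le\sqrt{B}$.

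The only real obstacle here is the convergence issue for $\mathcal{D}$ just mentioned; everything else is a direct manipulation of the Bessel inequality and the definition of the adjoint. Once $\mathcal{C}$ is shown to be bounded, invoking $\mathcal{C}^*$ to legitimize $\mathcal{D}$ on all of $\ell^2$ is the one non-routine step and it is the reason the three statements are proved simultaneously rather than in isolation.
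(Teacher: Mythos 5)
Your argument is correct and is essentially the standard proof from Christensen's book, which is exactly what the paper relies on (it states this proposition with a citation and gives no proof of its own). In particular, your handling of the one genuinely delicate point --- legitimizing $\mathcal{D}$ on all of $\ell^2$ by identifying it with $\mathcal{C}^*$ via the dense subspace of finitely supported sequences --- is the same device used in the cited reference, so there is nothing to add.
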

\begin{thm}[\cite{Christensen-2016}]
Let $\{f_k\}_{k=1}^{\infty}$ be a frame with frame operator $\mathcal{S}$ and frame bounds $A$ and $B$. Then the following hold:
\begin{itemize}
  \item $\mathcal{S}$ is bounded, invertible, self-adjoint and positive.
  \item $\{\mathcal{S}^{-1}f_k\}_{k=1}^{\infty}$ is a frame with frame operator $\mathcal{S}^{-1}$ and frame bounds $B^{-1}$ and $A^{-1}$. Also every $f\in\mathcal{H}$
  has expansions $f=\sum_{k}\langle f,\mathcal{S}^{-1}f_{k}\rangle f_{k}$ and $f=\sum_{k}\langle f,f_{k}\rangle \mathcal{S}^{-1}f_k$, where both sums converge unconditionally in $\mathcal{H}$.
\end{itemize}
\end{thm}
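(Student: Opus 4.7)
The plan is to prove the first bullet by reading off all four properties directly from the definitions of $\mathcal{S}$ and the frame inequality, then leverage them for the second bullet. Since $\mathcal{S} = \mathcal{D}\mathcal{C}$ and Proposition~\ref{Bessel bounds} gives $\mathcal{D} = \mathcal{C}^{*}$ with both operators bounded, boundedness and self-adjointness of $\mathcal{S}$ are immediate from $\mathcal{S}^{*} = (\mathcal{D}\mathcal{C})^{*} = \mathcal{C}^{*}\mathcal{D}^{*} = \mathcal{D}\mathcal{C}$. Positivity and the operator inequality $AI \leq \mathcal{S} \leq BI$ follow by rewriting the frame condition as
\begin{equation*}
A\|f\|^{2} \leq \langle \mathcal{S}f, f\rangle = \sum_{k} |\langle f, f_{k}\rangle|^{2} \leq B\|f\|^{2},
\end{equation*}
and then invertibility is obtained either by quoting that a self-adjoint operator bounded below by a positive multiple of $I$ is invertible, or by exhibiting a Neumann-type series for $I - B^{-1}\mathcal{S}$ (or $I - \frac{2}{A+B}\mathcal{S}$); the operator bound $B^{-1}I \leq \mathcal{S}^{-1} \leq A^{-1}I$ then drops out of functional calculus applied to $\mathcal{S}$.

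For the second bullet I would first show that $\{\mathcal{S}^{-1}f_{k}\}$ is a Bessel sequence, which reduces to the computation that its synthesis operator equals $\mathcal{S}^{-1}\mathcal{D}$ (bounded) and its analysis operator equals $\mathcal{C}\mathcal{S}^{-1}$ (using self-adjointness of $\mathcal{S}^{-1}$). The frame operator of $\{\mathcal{S}^{-1}f_{k}\}$ is then
\begin{equation*}
(\mathcal{S}^{-1}\mathcal{D})(\mathcal{C}\mathcal{S}^{-1}) = \mathcal{S}^{-1}\mathcal{S}\mathcal{S}^{-1} = \mathcal{S}^{-1},
\end{equation*}
and combining this identification with $B^{-1}I \leq \mathcal{S}^{-1} \leq A^{-1}I$ supplies both the claimed frame bounds and the identification of the new frame operator.

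With these pieces in place, the two reconstruction formulas are obtained by writing $f = \mathcal{S}\mathcal{S}^{-1}f$ and $f = \mathcal{S}^{-1}\mathcal{S}f$, expanding according to the definition of $\mathcal{S}$, and moving $\mathcal{S}^{-1}$ inside either the inner product (via self-adjointness) or the sum (via boundedness and continuity). Unconditional convergence of both series should be handled last: since $\{\langle f, \mathcal{S}^{-1}f_{k}\rangle\}$ and $\{\langle f, f_{k}\rangle\}$ lie in $\ell^{2}$, and $\mathcal{D}$ (respectively $\mathcal{S}^{-1}\mathcal{D}$) is a bounded operator defined on all of $\ell^{2}$, the images are limits of any rearrangement of the sums; this is where one uses that the synthesis operator is genuinely defined on $\ell^{2}$ rather than only on finitely supported sequences.

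The only real pitfall I anticipate is the unconditional convergence step, where one must resist the temptation to argue only for finite partial sums in the natural order; the cleanest route is to invoke the standard fact that every element of the range of an $\ell^{2}$-defined synthesis operator is an unconditional sum, rather than trying to verify a Cauchy criterion over finite subsets by hand.
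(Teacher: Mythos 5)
Your proposal is correct and is exactly the standard argument: the paper itself gives no proof of this theorem (it is quoted from Christensen's book), and your route via $\mathcal{S}=\mathcal{C}^{*}\mathcal{C}$, the operator inequality $AI\leq\mathcal{S}\leq BI$, the Neumann series for invertibility, and the factorizations $\mathcal{S}^{-1}\mathcal{D}$, $\mathcal{C}\mathcal{S}^{-1}$ for the dual frame is precisely the proof in the cited reference. The unconditional-convergence step is handled appropriately by reducing to the fact that the synthesis operator of a Bessel sequence converges unconditionally on all of $\ell^{2}$.
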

The frame $\{\mathcal{S}^{-1}f_k\}_{k=1}^{\infty}$ is called the \emph{canonical dual frame} of $\{f_k\}_{k=1}^{\infty}$.The reason for the name is that it
plays the same role in frame theory as the dual basis in the theory of bases. We denote the canonical dual frame of $\{f_k\}_{k=1}^{\infty}$ by $\{\tilde{f_{k}}\}_{k=1}^{\infty}$.

If $\{f_{k}\}_{k=1}^{\infty}$ is an overcomplete frame, then by \cite[Lemma 6.3.1]{Christensen-2016} there exist frames $\{g_{k}\}_{k=1}^{\infty}\neq\{\tilde{f}_{k}\}_{k=1}^{\infty}$
for which
\begin{equation}\label{dual property}
f=\sum_{k}\langle f,g_{k}\rangle f_{k},~~(f\in \mathcal{H}).
\end{equation}
$\{g_{k}\}_{k=1}^{\infty}$ is called an alternative dual of $\{f_{k}\}_{k=1}^{\infty}$.

In \cite{Schatten-1960}, R. Schatten provided a detailed study of ideals of compact operators using their singular decomposition. He
investigated the operators of the form $\sum_{k}\lambda_{k}g_{k}\otimes f_{k}$, where $\{f_{k}\}_{k=1}^{\infty}$ and $\{g_{k}\}_{k=1}^{\infty}$ are orthonormal families.
In \cite{Balazs-2007}, the orthonormal families were replaced with Bessel and frame sequences to define Bessel and frame multipliers.

Let $\mathcal{H}_{1}$ and $\mathcal{H}_{2}$ be Hilbert spaces. Let $\{f_{k}\}_{k=1}^{\infty}\subseteq\mathcal{H}_{1}$ and
$\{g_{k}\}_{k=1}^{\infty}\subseteq\mathcal{H}_{2}$ be Bessel sequences. Fix  $m=\{m_{k}\}_{k=1}^{\infty}\in\ell^{\infty}$.
The operator
\begin{equation}\label{eqn:multi intro}
\mathbf{M}_{m,\{g_{k}\},\{f_{k}\}}: \mathcal{H}_{1}\rightarrow\mathcal{H}_{2},\hspace{3mm}\mathbf{M}_{m,\{g_{k}\},\{f_{k}\}}(f)=\sum_{k}m_{k}\langle f,f_{k}\rangle g_{k}
\end{equation}
is called the \emph{Bessel multiplier} of the Bessel sequences $\{f_{k}\}_{k=1}^{\infty}$ and $\{g_{k}\}_{k=1}^{\infty}$.
The sequence $m$ is called the symbol of $\mathbf{M}$.

Also, the Bessel multiplier can be considered in terms of the synthesis and analysis operators. Let the mapping $\mathcal{M}_m:\ell^{2}\rightarrow\ell^{2}(m\in\ell^{p})$ is given by the pointwise multiplication $\mathcal{M}_{m}(\{c_k\})=\{m_{k}c_{k}\}$.
So a Bessel multiplier $\mathbf{M}_{m}$ can be written as $\mathbf{M}_{m}= {\mathcal{D}}_{\{g_{k}\}}o\mathcal{M}_{m}o~{\mathcal{C}}_{\{f_{k}\}}$.


Before mentioning some properties of Bessel multipliers, it will be convenient to recall some basic facts regarding the operators on Hilbert spaces.

If $f,g$ are elements of a Hilbert space $\mathcal{H}$, we define the \emph{tensor operator} $f\otimes g$ on $\mathcal{H}$
by $(f\otimes g)(h) = \langle h, g\rangle f$, for $h\in\mathcal{H}$. For an operator $\mathcal{U}\in \mathcal{B}(\mathcal{H})$,
the following equalities are readily verified:
\begin{equation*}
\mathcal{U}(f\otimes g) = \mathcal{U}(f)\otimes g,\hspace{5mm}(f\otimes g)\mathcal{U} = f\otimes \mathcal{U}^{*}(g).
\end{equation*}
Each finite rank operator is a linear combination of rank one operators of the form $f\otimes g$, for $f, g\in\mathcal{H}$.

If $\mathcal{U}$ is a compact operator, then there exist orthonormal sets $\{e_k\}^{\infty}_{k=1}$ and $\{\sigma_k\}^{\infty}_{k=1}$ in $\mathcal{H}$ such that
$$\mathcal{U}f = \sum_k \lambda_k\langle f, e_k\rangle \sigma_k,\hspace{3mm}(f\in\mathcal{H}),$$
where $\lambda_k$ is the k-th singular value of $\mathcal{U}$, \cite{zhuo}. Given
$0<p<+\infty$, we define the \emph{Schatten p-class} of $\mathcal{H}$ \cite{Schatten-1960}, denoted $\mathcal{S}_p(\mathcal{H})$, to be space of all compact operators $\mathcal{U}$ on $\mathcal{H}$ with its singular value sequence $\{\lambda_k\}$ belonging to $\ell^p$. It is known that $\mathcal{S}_p(\mathcal{H})$ is a Banach space with the norm
$$\|\mathcal{U}\|_p = \Big[\sum_k |\lambda_k|^p\Big]^{\frac{1}{p}}.$$
$\mathcal{S}_1(\mathcal{H})$ is also called \emph{trace-class} and $\mathcal{S}_2(\mathcal{H})$ is usually called the \emph{Hilbert-Schmidt} class.
If $\mathcal{U}$ is a trace-class operator on $\mathcal{H}$, then the trace-class norm can also be calculated by
$$\|\mathcal{U}\|_{1} := \sum_k \langle |\mathcal{U}|e_k, e_k\rangle,$$
where $|\mathcal{U}|$ is the operator for which $|\mathcal{U}| = (\mathcal{U}^{*}\mathcal{U})^{1/2}$ and $\{e_k\}_{k=1}^{\infty}$ is any orthonormal basis of $\mathcal{H}$ \cite{Schatten-1960}.
Moreover, the compact operator $\mathcal{U}$ is Hilbert-Schmidt if and only if $\|\mathcal{U}\|_2:=\sum_k \|\mathcal{U}e_k\|^2<+\infty$, for all orthonormal bases of $\mathcal{H}$ \cite{zhuo}.

It is proved that $\mathcal{S}_p(\mathcal{H})$ is a two sided $*$-ideal of $\mathcal{B}(\mathcal{H})$. Analogously, for Hilbert spaces $\mathcal{H}_1, \mathcal{H}_2, \mathcal{H}_3, \mathcal{H}_4$ and for operators $\mathcal{U}\in \mathcal{B}(\mathcal{H}_1,\mathcal{H}_2), \mathcal{V}\in\mathcal{B}(\mathcal{H}_4,\mathcal{H}_3)$ and
$\mathcal{W}\in\mathcal{S}_p(\mathcal{H}_3,\mathcal{H}_1)$, we have
$\mathcal{UW}\in\mathcal{S}_p(\mathcal{H}_3,\mathcal{H}_2)$
and $\mathcal{WV}\in\mathcal{S}_p(\mathcal{H}_4,\mathcal{H}_1)$.
Moreover, if $\mathcal{U}\in\mathcal{S}_p$ and $\mathcal{V}\in\mathcal{B}(\mathcal{H})$, then $\|\mathcal{UV}\|_p\leq\|\mathcal{U}\|_p\|\mathcal{V}\|$ and $\|\mathcal{VU}\|_p\leq\|\mathcal{U}\|_p\|\mathcal{V}\|$. We refer to \cite{Schatten-1960, zhuo, Murphy} for more detailed information about these operators.
\begin{thm}[\cite{Balazs-2007}]\label{balazs main theorem}
Let $\mathbf{M}=\mathbf{M}_{m,\{g_{k}\},\{f_{k}\}}$ be a Bessel multiplier for the Bessel sequences $\{f_{k}\}_{k=1}^{\infty}\subseteq\mathcal{H}_{1}$ and
$\{g_{k}\}_{k=1}^{\infty}\subseteq\mathcal{H}_{2}$ with the bounds $B$ and $B'$, respectively. Then
\begin{itemize}
  \item If $m\in\ell^{\infty}$, $\mathbf{M}$ is a well-defined bounded operator with
 \mbox{$\|\mathbf{M}\|_{op}\leq\sqrt{BB'}\| m\|_{\infty}$}. Moreover the sum $\sum_{k}m_{k}\langle f,f_{k}\rangle g_{k}$
converges unconditionally for all $f\in\mathcal{H}_{1}$.
  \item ($\mathbf{M}_{m,\{g_{k}\},\{f_{k}\}})^{*} = \mathbf{M}_{\bar{m},\{f_{k}\},\{g_{k}\}}$.
  \item If $m\in c_{0}$, $\mathbf{M}$ is a compact operator.
  \item If $m\in\ell^{1}$, $\mathbf{M}$ is a trace-class operator with  $\|\mathbf{M}\|_{1}\leq\sqrt{BB'}\| m\|_{1}$.
  \item If $m\in\ell^{2}$, $\mathbf{M}$ is a  Hilbert-Schmidt operator with $\|\mathbf{M}\|_{2}\leq\sqrt{BB'}\| m\parallel_{2}$.
\end{itemize}
\end{thm}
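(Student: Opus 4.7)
The strategy is to exploit the factorization recorded just before the theorem, namely $\mathbf{M} = \mathcal{D}_{\{g_k\}} \circ \mathcal{M}_m \circ \mathcal{C}_{\{f_k\}}$, which reduces every item to an elementary property of the diagonal ``multiplier'' $\mathcal{M}_m$ on $\ell^2$, combined with the facts about Bessel analysis/synthesis in Proposition~\ref{Bessel bounds} and the ideal behaviour of Schatten classes under composition with bounded operators recalled in the preliminaries. First I would record the bookkeeping for $\mathcal{M}_m$ itself: when $m\in\ell^\infty$ it is bounded with $\|\mathcal{M}_m\|_{op}=\|m\|_\infty$; its adjoint is $\mathcal{M}_{\bar m}$; when $m\in c_0$ it is the operator-norm limit of the finite-rank truncations $\mathcal{M}_{m^{(N)}}$, hence compact; and since $\mathcal{M}_m$ is diagonal in the canonical basis with singular values $|m_k|$, it lies in $\mathcal{S}_p(\ell^2)$ whenever $m\in\ell^p$, with $\|\mathcal{M}_m\|_p=\|m\|_p$.

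For item~(1), I would combine the factorization with Proposition~\ref{Bessel bounds} to get
\[
\|\mathbf{M}\|_{op}\leq \|\mathcal{D}_{\{g_k\}}\|\,\|\mathcal{M}_m\|\,\|\mathcal{C}_{\{f_k\}}\|\leq \sqrt{BB'}\,\|m\|_\infty.
\]
Unconditional convergence of $\sum_k m_k\langle f,f_k\rangle g_k$ then follows because $\{m_k\langle f,f_k\rangle\}_k=\mathcal{M}_m(\mathcal{C}_{\{f_k\}}f)\in\ell^2$, and the synthesis operator $\mathcal{D}_{\{g_k\}}$ of a Bessel sequence converges unconditionally on every $\ell^2$ input (a standard consequence of its continuity on $\ell^2$). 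Item~(2) is a one-line computation: taking adjoints in the factorization yields
\[
\mathbf{M}^{*}=\mathcal{C}_{\{f_k\}}^{*}\circ\mathcal{M}_m^{*}\circ\mathcal{D}_{\{g_k\}}^{*}=\mathcal{D}_{\{f_k\}}\circ\mathcal{M}_{\bar m}\circ\mathcal{C}_{\{g_k\}}=\mathbf{M}_{\bar m,\{f_k\},\{g_k\}},
\]
using that analysis and synthesis are mutually adjoint.

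Items~(3)--(5) are then immediate from the two-sided ideal property of the compact operators, $\mathcal{S}_1$ and $\mathcal{S}_2$: sandwiching $\mathcal{M}_m$, which lies in the relevant ideal by the preparatory observations, between the bounded operators $\mathcal{D}_{\{g_k\}}$ and $\mathcal{C}_{\{f_k\}}$ keeps $\mathbf{M}$ in that ideal. For the norm bounds in (4) and (5) I would apply $\|\mathcal{U}\mathcal{V}\|_p\leq\|\mathcal{U}\|_p\|\mathcal{V}\|$ twice to obtain
\[
\|\mathbf{M}\|_p\leq \|\mathcal{D}_{\{g_k\}}\|\,\|\mathcal{M}_m\|_p\,\|\mathcal{C}_{\{f_k\}}\|\leq \sqrt{BB'}\,\|m\|_p
\]
for $p\in\{1,2\}$.

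The whole argument is essentially formal once the factorization is in place, so no step is genuinely hard; the only point deserving care is the unconditional-convergence claim in~(1), but this is handled by reading the sum as $\mathcal{D}_{\{g_k\}}$ applied to an $\ell^2$ sequence and invoking the Bessel property of $\{g_k\}$. The remaining verifications are all mechanical applications of ideal norm estimates.
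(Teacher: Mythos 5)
Your proof is correct; the paper states this theorem without proof (citing Balazs 2007), but its proof of the generalized analogue, Theorem \ref{main theorem}, uses exactly the same factorization $\mathbf{M}=\mathcal{D}_{\{g_k\}}\circ\mathcal{U}\circ\mathcal{C}_{\{f_k\}}$ together with Proposition \ref{Bessel bounds} and the Schatten ideal properties, and your argument is precisely the specialization of that template to $\mathcal{U}=\mathcal{M}_m$. The extra bookkeeping you supply --- that $\mathcal{M}_m$ is diagonal with singular values $|m_k|$, and that unconditional convergence follows from applying $\mathcal{D}_{\{g_k\}}$ to the $\ell^2$ sequence $\mathcal{M}_m(\mathcal{C}_{\{f_k\}}f)$ --- is handled correctly.
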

\section{Generalized Bessel multipliers}
It is well known that each linear operator is represented by corresponding matrix, and the
opposite is also true: each matrix generates corresponding linear operator. Some authors have constructed various
types of matrix representation of operators using an orthonormal basis \cite{J.B. Conway}, frames and their canonical duals \cite{Christensen-1995},
Gabor frames \cite{groching-2006},  Bessel sequences \cite{Balazs-2008} and localized frames \cite{Balazs-2016}.
The operators induced by matrices, with respect to Bessel sequences, are also introduced by Balazs \cite{Balazs-2008}.

In the sequel we focus on the properties of these operators.
\begin{dfn}\label{bessel u multi}
Let $\mathcal{H}_{1}$ and $\mathcal{H}_{2}$ be Hilbert spaces and $\{f_{k}\}_{k=1}^{\infty}\subseteq\mathcal{H}_{1}$ and
$\{g_{k}\}_{k=1}^{\infty}\subseteq\mathcal{H}_{2}$ be Bessel sequences. Also suppose that $\mathcal{U}$ is an arbitrary
non zero bounded and
linear operator on $\ell^2$, which can be considered as a matrix. Then, the operator $\mathbf{M}_{\mathcal{U},\{g_{k}\},\{f_{k}\}}:\mathcal{H}_{1}\rightarrow\mathcal{H}_{2}$ defined by
\begin{equation}\label{eqn:g-multi intro}
\mathbf{M}_{\mathcal{U},\{g_{k}\},\{f_{k}\}}=\mathcal{D}_{\{g_{k}\}}o\hspace{1mm}\mathcal{U}\hspace{1mm}o\hspace{1mm}\mathcal{C}_{\{f_{k}\}}.
\end{equation}
is called the \emph{generalized Bessel multiplier}.
\end{dfn}
The operator $\mathcal{U}$ is called the symbol of $\mathbf{M}$. In particular if $\mathcal{U}=\mathcal{M}_{m}$, the mentioned pointwise multiplication, then
$\mathbf{M}_{\mathcal{U}}$ is a Bessel multiplier. Clearly, the roles of sequences and symbol in \eqref{bessel u multi} is very important.

If we denote $\mathbf{M}_{\mathcal{U},\{f_k\}}=\mathbf{M}_{\mathcal{U},\{f_k\},\{f_k\}}$, then it is easy to check that for
orthonormal sequence $\{e_{k}\}_{k=1}^{\infty}$, we have the '\textbf{symbolic calculus}' as follows
\begin{equation}\label{symbolic calculus}
\mathbf{M}_{\mathcal{U}_{1},\{e_{k}\}} o ~\mathbf{M}_{\mathcal{U}_{2},\{e_{k}\}} = \mathbf{M}_{\mathcal{U}_{1} o ~\mathcal{U}_{2},\{e_{k}\}}.
\end{equation}
Now, we plan to establish analogues of Theorem \ref{balazs main theorem} for generalized Bessel multipliers.
It is worth mentioning that some of known results in the following theorem have already been stated in \cite{Balazs-2008, Balazs-2009}
without proofs. However, we include the proofs for the sake of completeness.


The interested reader can find the properties of these operators in \cite{Murphy}.
\begin{thm}\cite[Theorem 6.0.4]{Balazs-2009}\label{main theorem}
Let $\mathbf{M}=\mathbf{M}_{\mathcal{U},\{g_k\},\{f_k\}}$ be the generalized Bessel multiplier for the Bessel sequences $\{f_{k}\}_{k=1}^{\infty}\subseteq\mathcal{H}_{1}$ and $\{g_{k}\}_{k=1}^{\infty}\subseteq\mathcal{H}_{2}$ with the bounds $B$ and $B'$, respectively. Then
\begin{enumerate}
  \item If $\mathcal{U}\in\mathcal{B}(\ell^2)$, then $\mathbf{M}$ is a well-defined and bounded operator with
  $\|\mathbf{M}\|_{op}\leq\sqrt{BB'}\|\mathcal{U}\|_{op}$.
  \item $(\mathbf{M}_{\mathcal{U},\{g_k\},\{f_k\}})^{*}=\mathbf{M}_{\mathcal{U}^{*},\{f_k\},\{g_k\}}$.
  \item If $\mathcal{U}$ is a compact operator on $\ell^2$, then $\mathbf{M}$ is a compact operator from $\mathcal{H}_1$ to $\mathcal{H}_2$.
\item If $\mathcal{U}\in\mathcal{S}_p(\mathcal{H}_1, \mathcal{H}_2)$, then $\mathbf{M}\in\mathcal{S}_p(\mathcal{H}_1, \mathcal{H}_2)$ with $\|\mathbf{M}\|_p\leq\sqrt{BB'}\|\mathcal{U}\|_p$.
  \item If $\mathcal{U}\in\mathcal{B}(\ell^2)$ is a positive operator, then $\mathbf{M}_{\mathcal{U},\{f_k\}}\in\mathcal{B}(\mathcal{H}_1)$ is also positive.
  \end{enumerate}
\end{thm}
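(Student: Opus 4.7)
The entire proof will exploit the factorization $\mathbf{M}=\mathcal{D}_{\{g_k\}}\circ\mathcal{U}\circ\mathcal{C}_{\{f_k\}}$ from Definition \ref{bessel u multi}, together with the bounds on the analysis/synthesis operators given by Proposition \ref{Bessel bounds} (so $\|\mathcal{C}_{\{f_k\}}\|_{op}\le\sqrt{B}$ and $\|\mathcal{D}_{\{g_k\}}\|_{op}\le\sqrt{B'}$) and with the fact that $\mathcal{D}_{\{f_k\}}^{*}=\mathcal{C}_{\{f_k\}}$. Once this factorization is in hand, items (1)--(5) are essentially ``sandwich'' arguments: each structural property that $\mathcal U$ enjoys on $\ell^{2}$ is transferred to $\mathbf M$ by composing with the bounded operators $\mathcal C$ and $\mathcal D$ on either side.

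For (1), I would simply chain the three operator norm estimates. Since $\mathcal{C}_{\{f_k\}}:\mathcal H_1\to\ell^{2}$ is bounded by $\sqrt{B}$, the image $\mathcal{C}_{\{f_k\}}f$ is in $\ell^{2}$, and $\mathcal{U}$ acts on it; then $\mathcal{D}_{\{g_k\}}$ maps the result into $\mathcal H_2$. This gives well-definedness and $\|\mathbf M\|_{op}\le\sqrt{B'}\|\mathcal U\|_{op}\sqrt{B}$. For (2), I would apply the formula $(ABC)^{*}=C^{*}B^{*}A^{*}$ to the factorization, so that
\[
\mathbf{M}^{*} \;=\; \mathcal{C}_{\{f_k\}}^{*}\,\mathcal{U}^{*}\,\mathcal{D}_{\{g_k\}}^{*} \;=\; \mathcal{D}_{\{f_k\}}\,\mathcal{U}^{*}\,\mathcal{C}_{\{g_k\}},
\]
which is exactly $\mathbf{M}_{\mathcal U^{*},\{f_k\},\{g_k\}}$.

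For (3), I would use that the compact operators form a two-sided ideal inside $\mathcal B$: since $\mathcal U$ is compact and $\mathcal C_{\{f_k\}}$, $\mathcal D_{\{g_k\}}$ are bounded, the composition $\mathcal D_{\{g_k\}}\mathcal U\mathcal C_{\{f_k\}}$ is compact. Item (4) is the same argument at the level of the Schatten ideal $\mathcal S_p$, using the submultiplicativity inequality $\|\mathcal{A}\mathcal{W}\mathcal{V}\|_p\le\|\mathcal A\|\,\|\mathcal W\|_p\,\|\mathcal V\|$ recalled in the preliminaries; this yields both the ideal membership and the bound $\|\mathbf M\|_p\le\sqrt{B'}\|\mathcal U\|_p\sqrt{B}$. (I note that the statement writes $\mathcal U\in\mathcal S_p(\mathcal H_1,\mathcal H_2)$, but of course $\mathcal U$ lives on $\ell^{2}$; I would read this as $\mathcal U\in\mathcal S_p(\ell^{2})$.) Finally, for (5), when the two Bessel sequences coincide we have $\mathcal D_{\{f_k\}}=\mathcal C_{\{f_k\}}^{*}$, so
\[
\langle \mathbf M_{\mathcal U,\{f_k\}} f,f\rangle \;=\; \langle \mathcal U\,\mathcal C_{\{f_k\}}f,\mathcal C_{\{f_k\}}f\rangle \;\ge\; 0
\]
for every $f\in\mathcal H_1$, by positivity of $\mathcal U$.

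None of the five parts presents a serious obstacle; the only mild care needed is in (4), to ensure the Schatten estimate is applied in the correct order (sandwich a Schatten operator between two bounded ones), and in (5), to notice that the hypothesis $\{g_k\}=\{f_k\}$ is what produces a $\mathcal C^{*}\mathcal U\mathcal C$ form so that positivity can be read off directly.
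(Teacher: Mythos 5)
Your proposal is correct, and for items (1), (2), (4) and (5) it coincides with the paper's proof essentially line for line: the same factorization $\mathbf{M}=\mathcal{D}_{\{g_k\}}\circ\mathcal{U}\circ\mathcal{C}_{\{f_k\}}$, the same norm chain, the same $(ABC)^{*}=C^{*}B^{*}A^{*}$ computation, the same ideal/submultiplicativity argument for $\mathcal{S}_p$, and the same identity $\langle f,\mathbf{M}_{\mathcal{U},\{f_k\}}f\rangle=\langle\mathcal{C}_{\{f_k\}}f,\mathcal{U}\mathcal{C}_{\{f_k\}}f\rangle\geq 0$ for positivity. (Your reading of the typo $\mathcal{S}_p(\mathcal{H}_1,\mathcal{H}_2)$ as $\mathcal{S}_p(\ell^2)$ is also what the paper's proof implicitly uses.) The one place you genuinely diverge is item (3): you dispose of it in one line by citing that the compact operators form a two-sided ideal, so $\mathcal{D}_{\{g_k\}}\,\mathcal{U}\,\mathcal{C}_{\{f_k\}}$ is compact whenever $\mathcal{U}$ is. The paper instead argues from first principles: it first shows that a finite-rank symbol $\mathcal{U}=\sum_{j=1}^{n}x_j\otimes x_j$ produces the finite-rank multiplier $\sum_{j=1}^{n}\mathcal{D}_{\{g_k\}}(x_j)\otimes\mathcal{D}_{\{f_k\}}(x_j)$, then approximates a general compact $\mathcal{U}$ in operator norm by finite-rank operators $\mathcal{U}_{\alpha}$ and uses the estimate $\|\mathbf{M}_{\mathcal{U}_{\alpha}}-\mathbf{M}_{\mathcal{U}}\|_{op}\leq\sqrt{BB'}\|\mathcal{U}_{\alpha}-\mathcal{U}\|_{op}$ to conclude that $\mathbf{M}_{\mathcal{U}}$ is a norm limit of finite-rank operators. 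Both arguments are valid; yours is shorter and exactly parallel to the treatment of (4), while the paper's is more self-contained and records the additional fact that finite-rank symbols yield finite-rank multipliers.
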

\begin{proof}
(1) Since $\{g_{k}\}_{k=1}^{\infty}$ is a Bessel sequence and for each $f\in \mathcal{H}_{1}$, we have $\mathcal{U}o~\mathcal{C}_{\{f_{k}\}}\in\ell^2$, so \eqref{eqn:g-multi intro} is a well-defined operator. Furthermore
\begin{equation*}
\|\mathbf{M}\|_{op}=\|\mathcal{D}_{\{g_{k}\}}o\hspace{1mm}\mathcal{U}\hspace{1mm}o\hspace{1mm}\mathcal{C}_{\{f_{k}\}}\|_{op}
\leq\|\mathcal{D}_{\{g_{k}\}}\|_{op}.
\|\mathcal{U}\|_{op}.\|\mathcal{C}_{\{f_{k}\}}\|_{op}\leq\sqrt{BB'}\|\mathcal{U}\|_{op}.
\end{equation*}
(2) Clearly by Definition \ref{eqn:g-multi intro}, for each $f\in\mathcal{H}_1$ and $g\in\mathcal{H}_2$, we have:
\begin{equation*}
\begin{split}
\mathbf{M}^{*}_{\mathcal{U},\{g_k\},\{f_k\}}
& =(\mathcal{D}_{\{g_k\}}o~\mathcal{U}o~\mathcal{C}_{\{f_k\}})^{*}\\
& =(\mathcal{C}^{*}_{\{f_k\}}o~\mathcal{U}^{*}o~\mathcal{D}^{*}_{\{g_k\}})\\
& =(\mathcal{D}_{\{f_k\}}o~\mathcal{U}^{*}o~\mathcal{C}_{\{g_k\}})\\
& =\mathbf{M}_{\mathcal{U}^{*},\{f_k\},\{g_k\}}
\end{split}
\end{equation*}
(3) First we show that $\mathbf{M}$ is a finite rank operator if $\mathcal{U}$ is one. Let $\mathcal{U}$ be a finite rank operator on $\ell^2$. Without loss of generality, we may assume that $\mathcal{U}$ is positive. Then $\mathcal{U}=\sum_{j=1}^{n}(x_j\otimes x_j)$,
for some vectors $\{x_{j}\}\in\ell^2$. Hence

\begin{equation*}
\begin{split}
\mathbf{M}=\mathcal{D}_{\{g_{k}\}}o\hspace{1mm}\mathcal{U}\hspace{1mm}o\hspace{1mm}\mathcal{C}_{\{f_{k}\}}
& =\sum_{j=1}^{n}\Big(\mathcal{D}_{\{g_{k}\}}(x_j)\otimes\mathcal{C}_{\{f_{k}\}}^{*}(x_j)\Big)\\
& = \sum_{j=1}^{n}\Big(\mathcal{D}_{\{g_{k}\}}(x_j)\otimes\mathcal{D}_{\{f_{k}\}}(x_j)\Big),\\
\end{split}
\end{equation*}
since the operators $\mathcal{D}_{\{g_{k}\}}(x_j)\otimes\mathcal{D}_{\{f_{k}\}}(x_j)$ are finite rank, we are done.

Now, suppose $\mathcal{U}$ is a compact operator on $\ell^2$. Then, by \cite[Theorem 1.3.13]{zhuo}, there exists the sequence $\{\mathcal{U}_{\alpha}\}$ of
finite rank operators on $\ell^2$
such that $\mathcal{U}_{\alpha}\rightarrow\mathcal{U}$ with respect to the operator norm in $\mathcal{B}(\ell^2)$. Now
\begin{equation*}
\begin{split}
\|\mathbf{M}_{\mathcal{U}_{\alpha}} - \mathbf{M}_{\mathcal{U}}\|_{op}
&=\|\mathcal{D}_{\{g_{k}\}}o~\mathcal{U}_{\alpha}~o~\mathcal{C}_{\{f_{k}\}}
- \mathcal{D}_{\{g_{k}\}}o~\mathcal{U}~o~\mathcal{C}_{\{f_{k}\}}\|_{op}\\
& = \|\mathcal{D}_{\{g_{k}\}}o(\mathcal{U}_{\alpha}-\mathcal{U})~o~\mathcal{C}_{\{f_{k}\}}\|_{op}\\
&\leq \|\mathcal{D}_{\{g_{k}\}}\|_{op}\|\mathcal{U}_{\alpha}-\mathcal{U}\|_{op}\|\mathcal{C}_{\{f_{k}\}}\|_{op}\\
& \leq \sqrt{BB'}\|\mathcal{U}_{\alpha}-\mathcal{U}\|_{op}.\\
\end{split}
\end{equation*}
As proved, $\mathbf{M}_{\mathcal{U}_{\alpha}}$'s are finite rank operators and $\mathbf{M}_{\mathcal{U}}$ is a limit of such operators, so
$\mathbf{M}_{\mathcal{U}}$ is compact.\\
(4) Since $\mathcal{S}_p(\mathcal{H}_1, \mathcal{H}_2)$ is a two sided ideal of $\mathcal{B}(\mathcal{H}_1, \mathcal{H}_2)$, so $\mathbf{M}\in\mathcal{S}_p(\mathcal{H}_1, \mathcal{H}_2)$. Moreover, by the inequality
\begin{equation*}
\|\mathcal{D}_{\{g_{k}\}}~o~\mathcal{U}~o~\mathcal{C}_{\{f_{k}\}}\|_{p}\leq\|\mathcal{D}_{\{g_{k}\}}\|_{op}
\|\mathcal{U}\|_{p}\|\mathcal{C}_{\{f_{k}\}}\|_{op},
\end{equation*}
the result holds.\\
(5) Since $\mathcal{U}$ is positive, $\langle c, \mathcal{U}c\rangle\geq 0$ for all $c = \{c_k\}\in \ell^2(\mathbb{N})$. Hence for each $f\in \mathcal{H}$,
$$\langle f, \mathbf{M}_{\mathcal{U},\{f_k\}}(f)\rangle = \langle f, \mathcal{D}_{\{f_k\}}\mathcal{U}~\mathcal{C}_{\{f_k\}}(f) \rangle = \langle \mathcal{C}_{\{f_k\}}(f), \mathcal{U}~\mathcal{C}_{\{f_k\}}(f)\rangle\geq 0.$$
So it follows that $\mathbf{M}_{\mathcal{U},\{f_k\}}$ is positive.
\end{proof}
Here is an example which shows that a generalized Bessel multiplier may not be a compact operator
if $\mathcal{U}\notin \mathcal{K}(\ell^2)$.
\begin{example}
Let $\{f_{k}\}_{k=1}^{\infty}$ and $\{g_{k}\}_{k=1}^{\infty}$ be two Bessel sequences in $\mathcal{H}$.
Moreover let $\mathcal{U}:\ell^2(\mathbb{Z})\rightarrow\ell^2(\mathbb{Z})$ be the convolution operator which is defined as:
$$\mathcal{U}:\ell^2\rightarrow\ell^2, \hspace{3mm}\mathcal{U}(a) = (a\ast c)_j = \sum_k a_k~c_{j-k},\hspace{3mm}(0\neq c\in \ell^1).$$
We know that $\mathcal{U}$ is not compact, indeed under the Fourier transform, this operator is unitarily equivalent to the multiplication operator
$M_{\varphi}(f)$ on $L^2(\mathbb{T}), (\mathbb{T} = \text{the unit circle})$. We can see in \cite{J.B. Conway}, the multiplication operator $M_{\varphi}$
is compact only if $\varphi = 0$, now $\varphi$ is the same as the Fourier transform of $c$, so $c=0$.\\
Now, define the generalized Bessel multiplier $\mathbf{M}_\mathcal{\mathcal{U}}:\mathcal{H}\rightarrow\mathcal{H}$ by
$$\mathbf{M}_\mathcal{U} = \mathcal{D}_{\{g_{k}\}}~\mathcal{U}~\mathcal{C}_{\{f_{k}\}}.$$
Clearly
$$\mathbf{M}_\mathcal{U}(f) = \sum_j\sum_k c_{j-k}\langle f, f_k\rangle g_j = \sum_j\sum_k c_{j-k}~~ g_j\otimes f_k (f).$$
One can check that, in general, this operator is not compact.
For instance, if $\{f_{k}\}_{k=1}^{\infty}$ and $\{g_{k}\}_{k=1}^{\infty}$ are Riesz bases, then $\mathbf{M}_\mathcal{U}$ is compact
if and only if $\mathcal{U}$ is a compact operator.
\end{example}
In the next example, we give two compact generalized multipliers associated to compact symbols.
\begin{example}
Suppose that $\{a_{ij}\}$ is a complex multi-index sequence such that
$\sum_{ij}|a_{ij}|^2<\infty$. Define the operator $\mathcal{U}:\ell^2(\mathbb{N})\rightarrow \ell^2(\mathbb{N})$ by
$$\{x_{n}\}_{n=1}^{\infty}\overset{\mathcal{U}}{\mapsto} \{y_{n}\}_{n=1}^{\infty}$$ where
$y_n = \sum_{k=1}^{\infty}a_{nk}x_{k}$.
In fact, if we consider $A=[a_{ij}]$, then $A$ is the Frobenius matrix (see \cite[Definition 3.2]{Balazs-2008}) such that $\mathcal{U}x = Ax$ for $x\in \ell^2(\mathbb{N})$. Since the Frobenius matrices correspond to Hilbert-Schmidt operators, hence $\mathcal{U}$ is a Hilbert-Schmidt operator.
Now, if $\{f_{k}\}_{k=1}^{\infty}$ and $\{g_{k}\}_{k=1}^{\infty}$ are two Bessel sequences for  Hilbert spaces $\mathcal{H}_1$ and $\mathcal{H}_2$, respectively, one can define the corresponding  generalized Bessel multiplier
$\mathbf{M}:\mathcal{H}_1\rightarrow\mathcal{H}_2$ by
$$
f\mapsto\sum_{n=1}^{\infty}\sum_{k=1}^{\infty}a_{nk}
(g_n\otimes f_k)(f).
$$
Then, by \cite[Proposition 3.6]{Balazs-2008}, $\mathbf{M}$ is a Hilbert-Schmidt operator. Now, if
$\mathcal{U}$ is the operator corresponding to
the following tri-block diagonal matrix
\begin{equation*}
A = [a_{ij}] = \left(
\begin{array}{ccccccc}
1 & 1 & 0 & 0 & 0 & 0 & \ldots\\
1 & \frac{1}{\sqrt2} & \frac{1}{\sqrt2} & 0 & \ddots & \ddots & \ddots\\
0 & \frac{1}{\sqrt2} & \frac{1}{\sqrt3} &\frac{1}{\sqrt3} & \ddots & \ddots & \ddots\\
0 & 0 & \frac{1}{\sqrt3} & \ddots & \ddots & \ddots &\ddots\\
\vdots & \ddots & \ddots & \ddots & \ddots & \ddots & \ddots\\
\vdots & \ddots & \ddots & \ddots & \ddots & \frac{1}{\sqrt n} & \ddots\\
\vdots & \ddots & \ddots & \ddots & \ddots & \ddots & \ddots
\end{array} \right),
\end{equation*}
then, by \cite[Theorem 2]{Bakic-1999}, $\mathcal{U}$ defines a compact operator on $\ell^2(\mathbb{N})$
and so by Proposition \ref{main theorem}, the generalized Bessel multiplier $\mathbf{M}_{\mathcal{U}}:\mathcal{H}_1\rightarrow \mathcal{H}_2$ is compact.
\end{example}
The following two propositions, which are similar to \cite[Proposition 3.9]{Khosravi-Mirzaee Azandaryani}, indicate that the underlying Bessel sequences
of bounded below generalized multipliers must be frames.
\begin{prop}\label{bounded below}
Let $\{f_{k}\}_{k=1}^{\infty}$ and $\{g_{k}\}_{k=1}^{\infty}$ be Bessel sequences for $\mathcal{H}$.
\begin{enumerate}
  \item If $\mathbf{M}_{\mathcal{U},\{g_k\},\{f_k\}}$ is
  a bounded below operator on $\mathcal{H}$, then $\{f_{k}\}_{k=1}^{\infty}$ is a frame for $\mathcal{H}$.
  \item If there exists $A>0$ such that for each $f\in\mathcal{H}$, $A\|f\|^2\leq|\langle \mathbf{M}_{\mathcal{U},\{g_k\},\{f_k\}}f, f \rangle|$,
  then $\{f_{k}\}_{k=1}^{\infty}$ and $\{g_{k}\}_{k=1}^{\infty}$ are both frames.
\end{enumerate}
\end{prop}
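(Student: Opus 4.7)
The plan is to exploit the factorization $\mathbf{M}_{\mathcal{U},\{g_k\},\{f_k\}}=\mathcal{D}_{\{g_k\}}\circ\mathcal{U}\circ\mathcal{C}_{\{f_k\}}$ together with the Bessel norm bounds from Proposition~\ref{Bessel bounds} (in particular $\|\mathcal{D}_{\{g_k\}}\|_{op}\le\sqrt{B'}$ and $\|\mathcal{C}_{\{g_k\}}\|_{op}\le\sqrt{B'}$). In both parts the Bessel assumption already supplies the upper frame bound, so only a lower bound on $\sum_k|\langle f,f_k\rangle|^2=\|\mathcal{C}_{\{f_k\}}f\|^2$ (and, in part (2), on $\|\mathcal{C}_{\{g_k\}}f\|^2$) has to be produced from the hypothesis on $\mathbf{M}$.

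For part (1), suppose $\mathbf{M}$ is bounded below with constant $C>0$. Estimating from above along the factorization,
\[
C\|f\|\;\le\;\|\mathbf{M}f\|\;\le\;\|\mathcal{D}_{\{g_k\}}\|_{op}\,\|\mathcal{U}\|_{op}\,\|\mathcal{C}_{\{f_k\}}f\|\;\le\;\sqrt{B'}\,\|\mathcal{U}\|_{op}\,\|\mathcal{C}_{\{f_k\}}f\|.
\]
Squaring delivers the lower frame inequality
$\bigl(C/(\sqrt{B'}\|\mathcal{U}\|_{op})\bigr)^{2}\|f\|^2\le\sum_k|\langle f,f_k\rangle|^2$, completing part (1).

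For part (2), the key observation is that using $\mathcal{D}^*=\mathcal{C}$ one can rewrite
\[
\langle\mathbf{M}f,f\rangle=\langle\mathcal{D}_{\{g_k\}}\mathcal{U}\mathcal{C}_{\{f_k\}}f,f\rangle=\langle\mathcal{U}\mathcal{C}_{\{f_k\}}f,\mathcal{C}_{\{g_k\}}f\rangle.
\]
Applying Cauchy--Schwarz in $\ell^2$ followed by $\|\mathcal{U}\|_{op}$ and then each Bessel bound gives two complementary estimates: bounding $\|\mathcal{C}_{\{g_k\}}f\|\le\sqrt{B'}\|f\|$ yields
$A\|f\|^2\le\|\mathcal{U}\|_{op}\sqrt{B'}\|f\|\,\|\mathcal{C}_{\{f_k\}}f\|$, which after cancellation and squaring gives the lower frame bound $A^{2}/(B'\|\mathcal{U}\|_{op}^{2})$ for $\{f_k\}$; bounding $\|\mathcal{C}_{\{f_k\}}f\|\le\sqrt{B}\|f\|$ instead yields the analogous lower bound $A^{2}/(B\|\mathcal{U}\|_{op}^{2})$ for $\{g_k\}$.

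There is no serious obstacle; the only subtlety is making sure the symbol $\mathcal{U}$ is genuinely nonzero so that $\|\mathcal{U}\|_{op}>0$, which is built into Definition~\ref{bessel u multi}, and correctly handling the adjoint identification $\mathcal{C}_{\{g_k\}}=\mathcal{D}_{\{g_k\}}^{*}$ when pairing the two analysis operators in part (2).
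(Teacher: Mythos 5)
Your proof is correct, and both parts rest on the same machinery as the paper's: the factorization $\mathbf{M}=\mathcal{D}_{\{g_k\}}\circ\mathcal{U}\circ\mathcal{C}_{\{f_k\}}$, the Bessel norm bounds $\|\mathcal{D}\|_{op}=\|\mathcal{C}\|_{op}\le\sqrt{B}$, and Cauchy--Schwarz. In part (1) the paper reaches the same estimate slightly less directly, by picking a unit vector $h$ with $C\|f\|\le|\langle\mathbf{M}f,h\rangle|$ and moving the operators across the inner product to get $|\langle\mathcal{C}_{\{f_k\}}f,\mathcal{U}^*\mathcal{C}_{\{g_k\}}h\rangle|$; your direct submultiplicative bound on $\|\mathcal{D}_{\{g_k\}}\mathcal{U}\mathcal{C}_{\{f_k\}}f\|$ yields the identical lower frame constant with less notation. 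In part (2) the routes genuinely diverge: the paper observes that $A\|f\|^2\le\|\mathbf{M}f\|\,\|f\|$ makes $\mathbf{M}$ bounded below, invokes part (1) for $\{f_k\}$, and then repeats the trick for the adjoint $\mathbf{M}^{*}=\mathbf{M}_{\mathcal{U}^{*},\{f_k\},\{g_k\}}$ to handle $\{g_k\}$, whereas you expand the quadratic form as $\langle\mathcal{U}\mathcal{C}_{\{f_k\}}f,\mathcal{C}_{\{g_k\}}f\rangle$ and apply Cauchy--Schwarz symmetrically in the two analysis operators. Your version is self-contained and treats the two sequences on an equal footing in a single display; the paper's version buys reuse of part (1) and of the adjoint formula from its Theorem on generalized multipliers. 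The constants agree, and the only hypothesis you need to flag, $\|\mathcal{U}\|_{op}>0$, is indeed guaranteed by the definition, so there is no gap.
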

\begin{proof}
(1) Since $\mathbf{M}_{\mathcal{U},\{g_k\},\{f_k\}}$ is bounded below, there exists a constant $C>0$ such that
$C\|f\|\leq \|\mathbf{M}_{\mathcal{U},\{g_k\},\{f_k\}}(f)\|$ for every $f\in \mathcal{H}$.
Now, for each non-zero element $f\in\mathcal{H}$, one can choose some $h\in\mathcal{H}$ with $\|h\|=1$ and $C\|f\|\leq |\langle \mathbf{M}_{\mathcal{U},\{g_k\},\{f_k\}}(f), h\rangle|$.
Thus, if $B$ is the Bessel bound for $\{g_{k}\}_{k=1}^{\infty}$, we have
\begin{equation*}
\begin{split}
C\|f\|\leq|\langle\mathbf{M}_{\mathcal{U},\{g_k\},\{f_k\}}(f), h\rangle|
& = |\langle \mathcal{D}_{\{g_{k}\}}~o~\mathcal{U}~o~\mathcal{C}_{\{f_{k}\}}(f), h\rangle|\\
& = |\langle \mathcal{C}_{\{f_{k}\}}(f), \mathcal{U}^{*}o~\mathcal{C}_{\{g_{k}\}}(h)\rangle|\\
&\leq \|\mathcal{C}_{\{f_{k}\}}(f)\|_{2} ~\|\mathcal{U}^{*}o~\mathcal{C}_{\{g_{k}\}}(h)\|_{2}\\
& \leq\sqrt{B}\|\mathcal{U}\|_{op} \Big(\sum_{k=1}^{\infty}|\langle f, f_k\rangle|^{2} \Big)^{1/2},\\
\end{split}
\end{equation*}
hence
$(\dfrac{C^2}{B\|\mathcal{U}\|_{op}^{2}})\|f\|^2\leq\sum_{k=1}^{\infty}|\langle f, f_k\rangle|^{2}$. The upper bound is obvious, since $\{f_{k}\}_{k=1}^{\infty}$ is
a Bessel sequence. Therefore $\{f_{k}\}_{k=1}^{\infty}$ is a frame.\\
(2) For each $f\in\mathcal{H}$, since $A\|f\|^2\leq|\langle \mathbf{M}_{\mathcal{U},\{g_k\},\{f_k\}}f, f \rangle|\leq\|\mathbf{M}_{\mathcal{U},\{g_k\},\{f_k\}}f\| \|f\|$, it follows that $\mathbf{M}_{\mathcal{U},\{g_k\},\{f_k\}}$ is bounded below and so by part (1), $\{f_k\}$ is a frame. Furthermore, by the same
argument we have $A\|f\|^2\leq\|\mathbf{M}_{\mathcal{U^{*}},\{f_k\},\{g_k\}}f\|\|f\|$. Hence, $\mathbf{M}_{\mathcal{U^{*}},\{f_k\},\{g_k\}}$ is
also a bounded below operator and so $\{g_k\}$ is a frame.
\end{proof}
\begin{prop}
Let $\{f_{k}\}_{k=1}^{\infty}$ and $\{g_{k}\}_{k=1}^{\infty}$ be Bessel sequences for $\mathcal{H}$.
\begin{enumerate}
  \item If there exist $\lambda_{1}<1$ and $\lambda_{2}>-1$ such that
  $$\|f - \mathbf{M}_{\mathcal{U},\{g_k\},\{f_k\}}(f)\|\leq \lambda_{1}\|f\|+\lambda_{2}\|\mathbf{M}_{\mathcal{U},\{g_k\},\{f_k\}}(f)\|,\hspace*{4mm} (f\in\mathcal{H}),$$
  then $\{f_{k}\}_{k=1}^{\infty}$ is a frame for $\mathcal{H}$.
  \item If there exists $\lambda\in[0, 1)$ such that $\|f - \mathbf{M}_{\mathcal{U},\{g_k\},\{f_k\}}(f)\|\leq \lambda\|f\|$, for every $f\in\mathcal{H}$,
  then $\{f_{k}\}_{k=1}^{\infty}$ and $\{g_{k}\}_{k=1}^{\infty}$ are frames for $\mathcal{H}$.
\end{enumerate}
\end{prop}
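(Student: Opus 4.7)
The plan is to reduce both parts to Proposition~\ref{bounded below} by showing that the hypothesis forces $\mathbf{M}$ (and in part (2) also its adjoint) to be bounded below.

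For part (1), I would start from the reverse triangle inequality
\[
\|f\|-\|\mathbf{M}_{\mathcal{U},\{g_k\},\{f_k\}}(f)\|\;\le\;\|f-\mathbf{M}_{\mathcal{U},\{g_k\},\{f_k\}}(f)\|,
\]
combine it with the given bound $\lambda_{1}\|f\|+\lambda_{2}\|\mathbf{M}_{\mathcal{U},\{g_k\},\{f_k\}}(f)\|$, and rearrange to obtain
\[
(1-\lambda_{1})\|f\|\;\le\;(1+\lambda_{2})\,\|\mathbf{M}_{\mathcal{U},\{g_k\},\{f_k\}}(f)\|.
\]
Since $\lambda_{1}<1$ and $\lambda_{2}>-1$, both factors are strictly positive, so $\mathbf{M}_{\mathcal{U},\{g_k\},\{f_k\}}$ is bounded below with constant $(1-\lambda_{1})/(1+\lambda_{2})$. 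Proposition~\ref{bounded below}(1) then immediately yields that $\{f_{k}\}_{k=1}^{\infty}$ is a frame for $\mathcal{H}$.

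For part (2), the hypothesis is the special case $\lambda_{1}=\lambda$, $\lambda_{2}=0$ of part (1), so $\{f_{k}\}_{k=1}^{\infty}$ is a frame by what was just proved. To handle $\{g_{k}\}_{k=1}^{\infty}$, I would pass to the adjoint. The pointwise bound $\|f-\mathbf{M}_{\mathcal{U},\{g_k\},\{f_k\}}(f)\|\le\lambda\|f\|$ for every $f$ is equivalent to $\|I-\mathbf{M}_{\mathcal{U},\{g_k\},\{f_k\}}\|_{op}\le\lambda<1$, hence
\[
\|I-\mathbf{M}_{\mathcal{U},\{g_k\},\{f_k\}}^{*}\|_{op}=\|(I-\mathbf{M}_{\mathcal{U},\{g_k\},\{f_k\}})^{*}\|_{op}\le\lambda.
\]
By Theorem~\ref{main theorem}(2), $\mathbf{M}_{\mathcal{U},\{g_k\},\{f_k\}}^{*}=\mathbf{M}_{\mathcal{U}^{*},\{f_k\},\{g_k\}}$, so the adjoint satisfies the same type of inequality with the roles of $\{f_k\}$ and $\{g_k\}$ swapped. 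Applying part (1) (or equivalently Proposition~\ref{bounded below}(1)) to $\mathbf{M}_{\mathcal{U}^{*},\{f_k\},\{g_k\}}$ gives that its analysis sequence, namely $\{g_{k}\}_{k=1}^{\infty}$, is a frame for $\mathcal{H}$.

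There is no real obstacle here; the only point requiring care is to keep track of which sequence plays the role of the analysis sequence after taking the adjoint, since Proposition~\ref{bounded below}(1) concludes only about the second-listed (analysis) family. The natural symmetry provided by Theorem~\ref{main theorem}(2) makes this bookkeeping transparent.
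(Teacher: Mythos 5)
Your proposal is correct and follows essentially the same route as the paper: part (1) via the reverse triangle inequality and Proposition~\ref{bounded below}, and part (2) by passing to the adjoint $\mathbf{M}_{\mathcal{U}^{*},\{f_k\},\{g_k\}}$, whose operator norm distance to the identity is the same, and then invoking part (1) again. The bookkeeping of which sequence is the analysis sequence after taking adjoints is handled exactly as in the paper's argument.
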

\begin{proof}
(1) Since
$\|f -\mathbf{M}_{\mathcal{U},\{g_k\},\{f_k\}}(f)\|\geq \|f\| - \|\mathbf{M}_{\mathcal{U},\{g_k\},\{f_k\}}(f)\|$, so by assumption
$$\lambda_1\|f\|+\lambda_2\|\mathbf{M}_{\mathcal{U},\{g_k\},\{f_k\}}(f)\|\geq\|f\| - \|\mathbf{M}_{\mathcal{U},\{g_k\},\{f_k\}}(f)\|,$$
hence $\|\mathbf{M}_{\mathcal{U},\{g_k\},\{f_k\}}(f)\|\geq\dfrac{1-\lambda_{1}}{1+\lambda_{2}}\|f\|$. Now the result follows from Proposition \ref{bounded below}.\\
(2) We have
\begin{equation*}
\begin{split}
\|f- \mathbf{M}_{\mathcal{U^{*}},\{f_k\},\{g_k\}}(f)\|
& =\|(Id_{\mathcal{H}}-\mathbf{M}_{\mathcal{U},\{g_k\},\{f_k\}})^{*}(f)\|\\
& \leq \|Id_{\mathcal{H}}-\mathbf{M}_{\mathcal{U},\{g_k\},\{f_k\}}\|~\|f\|\\
& \leq\lambda\|f\|,\\
\end{split}
\end{equation*}

therefore by using part (1), we obtain that $\{f_{k}\}_{k=1}^{\infty}$ and $\{g_{k}\}_{k=1}^{\infty}$ are frames.
\end{proof}
In \cite{Balazs-2008}, Balazs established Bessel sequences, frames and Riesz bases of the class of Hilbert-Schmidt operators
using tensor products of the same sequences in the associated Hilbert spaces.
Indeed, if $\{g_k\}$ and $\{f_i\}$ are frames (Riesz bases) with bounds $A, B$ and $A', B'$ for Hilbert spaces $\mathcal{H}_2$ and $\mathcal{H}_1$,
respectively, then $\{f_i\otimes g_k\}$ is a frame (Riesz basis) for $\mathcal{S}_2(\mathcal{H}_2, \mathcal{H}_1)$ with bounds $AA'$ and $BB'$.
Moreover, the synthesis operator associated with $\{f_i\otimes g_k\}$ is the generalized multiplier $\mathbf{M}_{\mathcal{U},\{g_k\},\{f_i\}}$, where
$\mathcal{U}\in \mathcal{S}_2(\ell^2)$. So by \eqref{eqn:Riesz intro}, for Riesz bases $\{g_k\}$ and $\{f_i\}$, we have
$$\sqrt{AA'}\|\mathcal{U}\|_{\mathcal{S}_2(\ell^2)}\leq\|\mathbf{M}_{\mathcal{U},\{g_k\},\{f_i\}}\|_{\mathcal{S}_2(\mathcal{H}_2, \mathcal{H}_1)}\leq\sqrt{BB'}\|\mathcal{U}\|_{\mathcal{S}_2(\ell^2)}.$$
The following proposition gives a lower bound for generalized multipliers associated to Riesz bases.
Recall that two sequences $\{f_k\}_{k=1}^{\infty}$ and $\{g_k\}_{k=1}^{\infty}$ in a Hilbert space
are biorthogonal if
\begin{equation}\label{biorthogonal condition}
\langle f_k,g_j\rangle =\delta_{k,j}.
\end{equation}
For any Riesz basis $\{f_k\}_{k=1}^{\infty}$, there is a unique biorthogonal sequence, which is also a Riesz basis and denoted by $\{\tilde{f}_k\}_{k=1}^{\infty}$.
\begin{prop}
Let $\{f_{k}\}_{k=1}^{\infty}$ and $\{g_{k}\}_{k=1}^{\infty}$ be Riesz bases with bounds, respectively, $A,B$ and $A',B'$. Then
\begin{equation*}
K\sqrt{AA'}\leq\|\mathbf{M}_{\mathcal{U},\{g_k\},\{f_k\}}\|_{op}\leq\sqrt{BB'}\|\mathcal{U}\|_{op},
\end{equation*}
where $K=\sup\{\|\mathcal{U}(e_n)\|_{\ell^2}; n\in\mathbb{N}\}$ and $\{e_n\}$ is the canonical basis of $\ell^2$.
\end{prop}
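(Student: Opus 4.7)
The upper bound $\|\mathbf{M}\|_{op}\le\sqrt{BB'}\|\mathcal{U}\|_{op}$ is immediate from Theorem \ref{main theorem}(1), since a Riesz basis with bounds $A,B$ is in particular a Bessel sequence with bound $B$. Thus the content of the proposition lies in the lower bound $K\sqrt{AA'}\le\|\mathbf{M}\|_{op}$, and my plan is to obtain it by testing $\mathbf{M}$ at the elements of the biorthogonal dual $\{\tilde f_n\}$ of $\{f_k\}$, whose existence and Riesz-basis character were recalled in the paragraph preceding the statement.

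The key step is the following computation. Biorthogonality $\langle \tilde f_n,f_k\rangle=\delta_{n,k}$ says exactly that $\mathcal{C}_{\{f_k\}}(\tilde f_n)=e_n$, hence
$$\mathbf{M}(\tilde f_n)=\mathcal{D}_{\{g_k\}}\bigl(\mathcal{U}(e_n)\bigr).$$
Applying the lower Riesz bound of $\{g_k\}$ to this synthesis expression gives $\|\mathbf{M}(\tilde f_n)\|\ge\sqrt{A'}\,\|\mathcal{U}(e_n)\|_{\ell^2}$. Dividing by $\|\tilde f_n\|$ and taking the supremum over $n$ will yield the claim, provided I can bound $\|\tilde f_n\|$ from above. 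For this I use that $\{\tilde f_n\}$ is itself a Riesz basis with bounds $1/B$ and $1/A$; in particular it is a Bessel sequence with bound $1/A$, and Proposition \ref{Bessel bounds} applied to it gives $\|\tilde f_n\|\le 1/\sqrt A$. Combining the two estimates,
$$\|\mathbf{M}\|_{op}\;\ge\;\frac{\|\mathbf{M}(\tilde f_n)\|}{\|\tilde f_n\|}\;\ge\;\sqrt{AA'}\,\|\mathcal{U}(e_n)\|_{\ell^2},$$
and the supremum over $n\in\mathbb{N}$ produces the desired lower bound $K\sqrt{AA'}$.

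The only real subtlety is the dual-bounds bookkeeping: one has to keep track of the fact that the \emph{lower} Riesz bound $A$ of $\{f_k\}$ reappears as the \emph{upper} Bessel bound $1/A$ of its biorthogonal dual, which is exactly what allows the factor $\sqrt{A}$ coming from $1/\|\tilde f_n\|$ to combine with the factor $\sqrt{A'}$ coming from the lower Riesz bound of $\{g_k\}$ to give the symmetric expression $\sqrt{AA'}$. Beyond this, the argument is purely formal and uses nothing about the symbol $\mathcal{U}$ other than its action on the canonical basis of $\ell^2$.
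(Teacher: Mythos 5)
Your proof is correct and follows essentially the same route as the paper's: both test $\mathbf{M}$ on the biorthogonal dual elements $\tilde f_n$ (so that $\mathcal{C}_{\{f_k\}}\tilde f_n=e_n$), apply the lower Riesz bound $\sqrt{A'}$ of $\{g_k\}$ to $\mathcal{D}_{\{g_k\}}(\mathcal{U}e_n)$, and divide by $\|\tilde f_n\|\le 1/\sqrt{A}$. The only difference is that you justify the estimate $\|\tilde f_n\|\le 1/\sqrt{A}$ explicitly via the dual Riesz bounds, whereas the paper uses it without comment.
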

\begin{proof}
As we have seen in Theorem \ref{main theorem}, the upper bound condition is satisfied. Now for the lower bound, consider an arbitrary element $k_0$ in the index set. Then
\begin{equation*}
\begin{split}
\mathbf{M}_{\mathcal{U},\{g_k\},\{f_k\}}(\tilde{f}_{k_0})
& =\mathcal{D}_{\{g_{k}\}}o~\mathcal{U}~o~\mathcal{C}_{\{f_{k}\}}(\tilde{f}_{k_0})\\
& =\mathcal{D}_{\{g_{k}\}}o~\mathcal{U}\{\langle\tilde{f}_{k_0},f_k\rangle_k\}\\
& =\mathcal{D}_{\{g_{k}\}}o~\mathcal{U}\{\delta_{k_{0},k}\}_{k},
\end{split}
\end{equation*}
therefore
\begin{equation*}
\begin{split}
\|\mathbf{M}_{\mathcal{U},\{g_k\},\{f_k\}}\|_{op}
= &\sup\{\dfrac{\|\mathbf{M}_{\mathcal{U},\{g_k\},\{f_k\}}(f)\|_{\mathcal{H}}}{\| f\|_{\mathcal{H}}}\}\\
& \geq \dfrac{\parallel\mathbf{M}_{\mathcal{U},\{g_k\},\{f_k\}}(\tilde{f}_{k_0})\parallel_{\mathcal{H}}}
{\|\tilde{f}_{k_0}\|_{\mathcal{H}}}\\
& =\dfrac{\|\mathcal{D}_{\{g_{k}\}}~o~\mathcal{U}\{\delta_{k_{0},k}\}_{k}\|_{\mathcal{H}}}{\|\tilde{f}_{k_0}\|_{\mathcal{H}}}\\
& \geq \dfrac{\sqrt{A'}\|\mathcal{U}\{\delta_{k_{0},k}\}_{k}\|_{2}}{1/\sqrt{A}},\\
\end{split}
\end{equation*}
since $k_0$ is chosen arbitrary, it follows that $K\sqrt{AA'}\leq \|\mathbf{M}_{\mathcal{U},\{g_k\}, \{f_k\}}\|_{op}$.
\end{proof}
It is mentioned that for an orthonormal sequence $\{e_{k}\}$, the combination of multipliers is just the combination of symbols as
$\mathbf{M}_{\mathcal{U}_{1},\{e_{k}\}}o~\mathbf{M}_{\mathcal{U}_{2},\{e_{k}\}} = \mathbf{M}_{\mathcal{U}_{1}o\mathcal{U}_{2},\{e_{k}\}}$.
This is true for all biorthogonal Bessel sequences as follows.
\begin{prop}
Let $\{f_{k}\}_{k=1}^{\infty},\{g_{k}\}_{k=1}^{\infty},\{l_{k}\}_{k=1}^{\infty}\subseteq\mathcal{H}_{2}$ and $\{h_{k}\}_{k=1}^{\infty}\subseteq\mathcal{H}_{1}$ be Bessel sequences, such that $\{l_k\}$ and$\{f_k\}$ are
biorthogonal to each other. Then
\begin{equation*}
\mathbf{M}_{\mathcal{U}_1,\{g_k\},\{f_k\}}~o~\mathbf{M}_{\mathcal{U}_2,\{l_k\},\{h_k\}}
=\mathbf{M}_{\mathcal{U}_1 o\mathcal{U}_2, \{g_k\},\{h_k\}}.
\end{equation*}
\end{prop}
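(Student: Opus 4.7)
The plan is to reduce the statement to the identity $\mathcal{C}_{\{f_k\}} \circ \mathcal{D}_{\{l_k\}} = \mathrm{Id}_{\ell^2}$ on $\ell^2$, which is exactly the operator-theoretic content of biorthogonality. Once this identity is established, the claim follows by writing each generalized multiplier in terms of its synthesis/analysis factorization and canceling the middle factor.

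Concretely, I would first expand both sides using Definition \ref{bessel u multi}. The left-hand side becomes
\begin{equation*}
\mathcal{D}_{\{g_k\}} \circ \mathcal{U}_1 \circ \mathcal{C}_{\{f_k\}} \circ \mathcal{D}_{\{l_k\}} \circ \mathcal{U}_2 \circ \mathcal{C}_{\{h_k\}},
\end{equation*}
while the right-hand side is
\begin{equation*}
\mathcal{D}_{\{g_k\}} \circ \mathcal{U}_1 \circ \mathcal{U}_2 \circ \mathcal{C}_{\{h_k\}}.
\end{equation*}
Thus, it suffices to show that the interior composition $\mathcal{C}_{\{f_k\}} \circ \mathcal{D}_{\{l_k\}}$ is the identity on $\ell^2$.

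For this, I would fix $\{c_k\} \in \ell^2$. Since $\{l_k\}$ is a Bessel sequence in $\mathcal{H}_2$, the series $\mathcal{D}_{\{l_k\}}(\{c_k\}) = \sum_j c_j l_j$ converges unconditionally in $\mathcal{H}_2$. Applying $\mathcal{C}_{\{f_k\}}$ and using continuity of the inner product together with the biorthogonality condition \eqref{biorthogonal condition}, I get, coordinate-wise,
\begin{equation*}
\bigl(\mathcal{C}_{\{f_k\}} \circ \mathcal{D}_{\{l_k\}}(\{c_k\})\bigr)_k = \Bigl\langle \sum_j c_j l_j, f_k \Bigr\rangle = \sum_j c_j \langle l_j, f_k\rangle = \sum_j c_j \delta_{j,k} = c_k,
\end{equation*}
so $\mathcal{C}_{\{f_k\}} \circ \mathcal{D}_{\{l_k\}} = \mathrm{Id}_{\ell^2}$. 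Substituting this into the expanded left-hand side yields exactly the right-hand side, completing the proof.

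There is no real obstacle here; the only point to watch is the legitimacy of pulling the sum out of the inner product, which is justified by Bessel-type convergence of $\sum_j c_j l_j$ in $\mathcal{H}_2$ and continuity of $\langle \cdot, f_k\rangle$. Everything else is bookkeeping.
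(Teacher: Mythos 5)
Your proof is correct and complete: reducing the claim to the identity $\mathcal{C}_{\{f_k\}}\circ\mathcal{D}_{\{l_k\}}=\mathrm{Id}_{\ell^2}$, which follows from biorthogonality together with Bessel convergence and continuity of the inner product, is exactly the intended argument. The paper itself gives no proof (it is ``left to the reader as a simple exercise''), so your write-up simply supplies the standard details the authors omitted.
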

\begin{proof}
The details are left to the reader as a simple exercise.
\end{proof}
\section{Sufficient and necessary conditions for invertibility of generalized multipliers}
As multipliers are important for applications, it is interesting to determine their inverse. Also the invertibility of multipliers helps us to obtain more reconstruction formula.

Moreover, as is mentioned above, the concepts multipliers and reproducing pairs are closely related to each other.
Reproducing pairs is recent development of
frames that generates a bounded analysis and synthesis process while the frame
condition can be omitted at both stages.
Recall that if $\{f_k\}_{k=1}^{\infty}$ and $\{g_k\}_{k=1}^{\infty}$ are two families in $\mathcal{H}$, then the pair
$(\{f_k\},\{g_k\})$ is called a \emph{reproducing pair} for $\mathcal{H}$ if the operator $S:\mathcal{H}\rightarrow\mathcal{H}$,
weakly given by
\begin{equation*}\label{reproducing pair}
\langle Sf, g\rangle = \sum_k\langle f, f_k\rangle\langle g_k, g\rangle,
\end{equation*}
is bounded and boundedly invertible.
Clearly, if the multiplier $\mathbf{M}_{\mathcal{I},\{g_{k}\},\{f_{k}\}}$ is invertible, then the pair $(\{f_k\},\{g_k\})$
forms a reproducing pair for $\mathcal{H}$.

Some conditions of the possibility of invertibility of generalized multipliers have been investigated more recently in work of
Balazs and Rieckh (see \cite{Balazsss-2016}). They obtained a representation of the inverse of these operators using the pseudo-inverse operator $\mathcal{U}^{\dagger}$.
In this section, the invertibility of generalized multipliers is studied. We provide some conditions for invertibility
of such operators depending on the underlying symbol and sequences.

It was shown, In \cite[Theorem 3.5]{Balazs-2008}, that if $\{f_{k}\}_{k=1}^{\infty}$ and $\{g_{k}\}_{k=1}^{\infty}$
be two Riesz bases of $\mathcal{H}$,
then the mapping $\mathcal{U}\mapsto \mathbf{M}_{\mathcal{U},\{g_{k}\},\{f_{k}\}}$ from $\mathcal{B}(\ell^2)$ to $\mathcal{B}(\mathcal{H})$ is invertible.
The following proposition gives a different viewpoint for the invertibility of generalized Riesz multipliers. Indeed, we show that
for Riesz sequences, the generalized multiplier is invertible if and only if its symbol is. This covers the part (i) of \cite[Theorem 5.1]{Stoeva-2012}.
\begin{prop}\label{nec and suf con for inver}
Let $\mathcal{U}\in\mathcal{B}(\ell^2)$ and $\{f_{k}\}_{k=1}^{\infty}$ and $\{g_{k}\}_{k=1}^{\infty}$ be Riesz bases. Then
$\mathcal{U}$ is invertible if and only if the associated generalized multiplier $\mathbf{M}_{\mathcal{U},\{g_{k}\},\{f_{k}\}}$ is invertible.
\end{prop}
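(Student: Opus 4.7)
The plan is to reduce the statement to the elementary fact that a composition of three operators, two of which are fixed bounded bijections, is invertible if and only if the middle factor is. Recall that by \eqref{eqn:g-multi intro} we have the factorization
\begin{equation*}
\mathbf{M}_{\mathcal{U},\{g_{k}\},\{f_{k}\}}=\mathcal{D}_{\{g_{k}\}}\,\mathcal{U}\,\mathcal{C}_{\{f_{k}\}},
\end{equation*}
so the point is to observe that when the underlying sequences are Riesz bases, both $\mathcal{D}_{\{g_k\}}$ and $\mathcal{C}_{\{f_k\}}$ are bounded bijections between $\ell^2$ and $\mathcal{H}$.

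First I would verify invertibility of the synthesis and analysis operators. By \eqref{eqn:Riesz intro}, the synthesis operator $\mathcal{D}_{\{f_k\}}:\ell^2\to\mathcal{H}$ satisfies $\sqrt{A}\|c\|_2\le\|\mathcal{D}_{\{f_k\}}c\|\le\sqrt{B}\|c\|_2$, so it is bounded below and, since $\overline{\mathrm{span}}\{f_k\}=\mathcal{H}$, also surjective; hence $\mathcal{D}_{\{f_k\}}$ is boundedly invertible. By Proposition \ref{Bessel bounds}, $\mathcal{C}_{\{f_k\}}=\mathcal{D}_{\{f_k\}}^{*}$, which is therefore also boundedly invertible. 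The same applies to $\mathcal{D}_{\{g_k\}}$ and $\mathcal{C}_{\{g_k\}}$.

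For the $(\Leftarrow)$ direction, assume $\mathcal{U}$ is invertible in $\mathcal{B}(\ell^{2})$. Then the composition
\begin{equation*}
\mathcal{C}_{\{f_{k}\}}^{-1}\,\mathcal{U}^{-1}\,\mathcal{D}_{\{g_{k}\}}^{-1}
\end{equation*}
is a well-defined bounded operator and a direct computation shows it is a two-sided inverse of $\mathbf{M}_{\mathcal{U},\{g_{k}\},\{f_{k}\}}$. For the $(\Rightarrow)$ direction, assume $\mathbf{M}_{\mathcal{U},\{g_{k}\},\{f_{k}\}}$ is invertible; then
\begin{equation*}
\mathcal{U}=\mathcal{D}_{\{g_{k}\}}^{-1}\,\mathbf{M}_{\mathcal{U},\{g_{k}\},\{f_{k}\}}\,\mathcal{C}_{\{f_{k}\}}^{-1}
\end{equation*}
is a composition of three invertible bounded operators, hence invertible in $\mathcal{B}(\ell^{2})$.

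Since the argument is purely algebraic once the bijectivity of $\mathcal{D}_{\{f_k\}}$, $\mathcal{C}_{\{f_k\}}$, $\mathcal{D}_{\{g_k\}}$, $\mathcal{C}_{\{g_k\}}$ is in hand, there is no real obstacle; the only subtlety worth stating explicitly is the passage from the Riesz basis conditions \eqref{eqn:Riesz intro} together with the spanning property to bounded invertibility of the synthesis operator, which is what makes this proof fail for general frames (where $\mathcal{D}$ is surjective but not injective) or general Bessel sequences (where $\mathcal{D}$ need not be either).
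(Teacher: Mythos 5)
Your proof is correct. The verification that $\mathcal{D}_{\{f_k\}}$ is bounded below by $\sqrt{A}$, has dense range by the spanning condition, hence closed and full range, and is therefore boundedly invertible together with its adjoint $\mathcal{C}_{\{f_k\}}$, is exactly the right foundation, and once those four operators are known to be bijections the two directions are immediate.

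The paper's proof rests on the same underlying fact but packages it differently: instead of writing $\mathcal{D}_{\{g_k\}}^{-1}$ and $\mathcal{C}_{\{f_k\}}^{-1}$ abstractly, it identifies them via biorthogonality as $\mathcal{C}_{\{\tilde{g}_k\}}$ and $\mathcal{D}_{\{\tilde{f}_k\}}$, where $\{\tilde{f}_k\}$, $\{\tilde{g}_k\}$ are the dual Riesz bases. The forward direction then exhibits the inverse explicitly as
\begin{equation*}
\mathbf{M}^{-1}_{\mathcal{U},\{g_{k}\},\{f_{k}\}}=\mathbf{M}_{\mathcal{U}^{-1},\{\tilde{f}_k\},\{\tilde{g}_k\}},
\end{equation*}
which is literally your operator $\mathcal{C}_{\{f_k\}}^{-1}\,\mathcal{U}^{-1}\,\mathcal{D}_{\{g_k\}}^{-1}$ rewritten, but with the added payoff that the inverse is recognized as being itself a generalized multiplier with symbol $\mathcal{U}^{-1}$ over the dual bases --- a structural fact your formulation leaves implicit. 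The converse in the paper is likewise your identity $\mathcal{U}=\mathcal{D}_{\{g_k\}}^{-1}\,\mathbf{M}\,\mathcal{C}_{\{f_k\}}^{-1}$, verified by composing with $\mathcal{C}_{\{\tilde{g}_k\}}\,\mathcal{D}_{\{g_k\}}=Id$ rather than by invoking inverses directly. Your version is shorter and makes transparent why the argument breaks for overcomplete frames (loss of injectivity of $\mathcal{D}$); the paper's version is preferable if one wants the closed-form multiplier representation of the inverse. Either way the content is the same and there is no gap.
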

\begin{proof}
At the first, suppose that $\mathcal{U}$ is invertible. Then for each $f\in\mathcal{H}$, by using the orthogonality property of dual Riesz bases, we have

\begin{equation*}
\begin{split}
(\mathbf{M}_{\mathcal{U},\{g_{k}\},\{f_{k}\}}~o~\mathbf{M}_{\mathcal{U}^{-1},\{\tilde{f}_k\},\{\tilde{g}_k\}})(f)
& =(\mathcal{D}_{\{g_k\}}o~\mathcal{U}~o~\mathcal{C}_{\{f_k\}})~o~(\mathcal{D}_{\{\tilde{f}_k\}}o~\mathcal{U}^{-1}o~\mathcal{C}_{\{\tilde{g}_k\}})(f)\\
& =(\mathcal{D}_{\{g_k\}}o~\mathcal{U}~o~\mathcal{U}^{-1}o~\mathcal{C}_{\{\tilde{g}_k\}})(f)\\
& =(\mathcal{D}_{\{g_k\}}o~\mathcal{C}_{\{\tilde{g}_k\}})(f)
 =Id(f),\\
\end{split}
\end{equation*}
so $\mathbf{M}^{-1}_{\mathcal{U},\{g_{k}\},\{f_{k}\}}=\mathbf{M}_{\mathcal{U}^{-1},\{\tilde{f}_k\},\{\tilde{g}_k\}}$.

\vspace{3mm}
Conversely, let $\mathbf{M}_{\mathcal{U},\{g_{k}\},\{f_{k}\}}$ has the inverse $\mathbf{N}$. Then for each $c\in\ell^2$
\begin{equation*}
\begin{split}
\mathcal{U}o\Big(\mathcal{C}_{\{f_k\}}o\mathbf{N}o\mathcal{D}_{\{g_k\}}\Big)(c)
& = \Big(\mathcal{U}o~\mathcal{C}_{\{f_k\}}o\mathbf{N}\Big)o\mathcal{D}_{\{g_k\}}(c)\\
& = \mathcal{C}_{\{\tilde{g}_k\}}o\mathcal{D}_{\{g_k\}}o\Big(\mathcal{U}o~\mathcal{C}_{\{f_k\}}o\mathbf{N}\Big)o\mathcal{D}_{\{g_k\}}(c)\\
& = \mathcal{C}_{\{\tilde{g}_k\}}o\Big(\mathcal{D}_{\{g_k\}}o\mathcal{U}o~\mathcal{C}_{\{f_k\}}o\mathbf{N}\Big)o\mathcal{D}_{\{g_k\}}(c)\\
&  = \Big(\mathcal{C}_{\{\tilde{g}_k\}}o~\mathcal{D}_{\{g_k\}}\Big)(c)\\
& =Id(c),\\
\end{split}
\end{equation*}
and by the similar argument,
$\Big(\mathcal{C}_{\{f_k\}}o\mathbf{N}o\mathcal{D}_{\{g_k\}}\Big)o~\mathcal{U}= Id$ and therefore $\mathcal{U}$ is invertible.
\end{proof}

In \cite[Proposition 3.1]{Stoeva-2012}, it is proved that if one of the sequences is Bessel, invertibility of $\mathbf{M}_{(1),\{g_{k}\},\{f_{k}\}}$ implies that
the other one must satisfies the lower frame condition. The following proposition gives a general version of this result.
\begin{prop}\label{lower frame condition}
Let $\mathbf{M}_{\mathcal{U},\{g_k\},\{f_k\}}$ be an invertible operator on $\mathcal{H}$.
If $\{f_{k}\}_{k=1}^{\infty}$ $(\{g_{k}\}_{k=1}^{\infty})$ is a Bessel sequence for $\mathcal{H}$ with bound $B$,
then $\{g_{k}\}_{k=1}^{\infty}$ $(\{f_{k}\}_{k=1}^{\infty})$ satisfies the lower frame condition for $\mathcal{H}$.
\end{prop}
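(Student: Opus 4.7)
The plan is to reduce the problem to the familiar fact that an invertible bounded operator on a Hilbert space is bounded below, and then to push this bound through the factorization $\mathbf{M} = \mathcal{D}_{\{g_k\}}\,\mathcal{U}\,\mathcal{C}_{\{f_k\}}$ using the already-available bounds on the synthesis and analysis operators from Proposition \ref{Bessel bounds}.

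More concretely, I would treat the case in which $\{g_k\}$ is Bessel directly, and the case in which $\{f_k\}$ is Bessel by duality via part (2) of Theorem \ref{main theorem}. Suppose first that $\{g_k\}$ is Bessel with bound $B$. Since $\mathbf{M} := \mathbf{M}_{\mathcal{U},\{g_k\},\{f_k\}}$ is invertible in $\mathcal{B}(\mathcal{H})$, it is bounded below: there exists $C>0$ (namely $C = 1/\|\mathbf{M}^{-1}\|$) such that $\|\mathbf{M}f\| \geq C\|f\|$ for all $f \in \mathcal{H}$. Now I would estimate
\begin{equation*}
C\|f\| \,\leq\, \|\mathcal{D}_{\{g_k\}}\,\mathcal{U}\,\mathcal{C}_{\{f_k\}}(f)\| \,\leq\, \|\mathcal{D}_{\{g_k\}}\|_{op}\,\|\mathcal{U}\|_{op}\,\|\mathcal{C}_{\{f_k\}}(f)\|_{2} \,\leq\, \sqrt{B}\,\|\mathcal{U}\|_{op}\,\Bigl(\sum_{k}|\langle f,f_k\rangle|^2\Bigr)^{1/2},
\end{equation*}
where the last step uses Proposition \ref{Bessel bounds} applied to the Bessel sequence $\{g_k\}$. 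Squaring yields
\begin{equation*}
\frac{C^2}{B\,\|\mathcal{U}\|_{op}^{2}}\,\|f\|^2 \,\leq\, \sum_{k}|\langle f,f_k\rangle|^2,
\end{equation*}
which is precisely the lower frame condition for $\{f_k\}$.

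For the parenthetical case, where $\{f_k\}$ is the Bessel sequence with bound $B$, I would apply the first case to the adjoint. Since $\mathbf{M}$ is invertible, so is $\mathbf{M}^{*}$, and by Theorem \ref{main theorem}(2) we have $\mathbf{M}^{*} = \mathbf{M}_{\mathcal{U}^{*},\{f_k\},\{g_k\}}$. Now $\{f_k\}$ plays the role of the Bessel sequence sitting in the synthesis slot of $\mathbf{M}^{*}$, so the previous paragraph (with the roles of $\{f_k\}$ and $\{g_k\}$ swapped, and with $\mathcal{U}$ replaced by $\mathcal{U}^{*}$, noting $\|\mathcal{U}^{*}\|_{op} = \|\mathcal{U}\|_{op}$) yields the lower frame condition for $\{g_k\}$.

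There is no real obstacle here: the argument is a clean composition of (i) the boundedness-below of invertible operators, (ii) the operator-norm estimates in Proposition \ref{Bessel bounds} for synthesis/analysis operators of Bessel sequences, and (iii) the adjoint identity in Theorem \ref{main theorem}(2). The only point of mild care is that one must invoke the Bessel bound on the correct sequence in each inequality, so that the sum of squares $\sum_k |\langle f, \cdot\rangle|^2$ appears on the sequence whose lower frame bound is sought. The constant obtained, $C^2/(B\|\mathcal{U}\|_{op}^{2})$, is exactly what one would expect by dimensional analysis of the three factors in the composition.
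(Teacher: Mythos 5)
Your proof is correct and is essentially the paper's argument in mirror image: the paper bounds $|\langle \mathbf{M}f,g\rangle|$ by Cauchy--Schwarz through the factorization and substitutes $f=\mathbf{M}^{-1}g$ (in effect showing $\mathbf{M}^{*}$ is bounded below), which directly yields the lower frame condition for $\{g_k\}$ from the Bessel bound of $\{f_k\}$, whereas you prove the parenthetical case first from the boundedness below of $\mathbf{M}$ itself and then recover the other case via the adjoint identity. The estimates and the resulting constant $C^{2}/(B\|\mathcal{U}\|_{op}^{2})$ coincide, so this is the same approach packaged dually.
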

\begin{proof}
For every $f, g\in\mathcal{H}$
\begin{equation*}
\begin{split}
|\langle \mathbf{M}_{\mathcal{U},\{g_k\},\{f_k\}}(f), g\rangle|
& = |\sum_k (\mathcal{U}\langle f,f_k\rangle)\langle g_k, g\rangle|\\
& \leq \Big(\sum_k |(\mathcal{U}\langle f,f_k\rangle)|^2\Big)^{1/2} \Big(\sum_k|\langle g_k, g\rangle|^2\Big)^{1/2}\\
& \leq \|\mathcal{U}\| \|f\|\sqrt{B} \Big(\sum_k|\langle g_k, g\rangle|^2\Big)^{1/2}.\\
\end{split}
\end{equation*}
Now for $f=\mathbf{M}^{-1}_{\mathcal{U},\{g_k\},\{f_k\}}(g)$, we have
$$\|g\|\leq \|\mathcal{U}\|\|\mathbf{M}^{-1}_{\mathcal{U},\{g_k\},\{f_k\}}\|\sqrt{B}\Big(\sum_k|\langle g_k, g\rangle|^2\Big)^{1/2}$$
and so the result holds.
\end{proof}
We immediately obtain the following useful corollary.
\begin{cor}
Suppose $\{g_{k}\}_{k=1}^{\infty}$ is a Riesz basis and $\{f_{k}\}_{k=1}^{\infty}$ is a Bessel sequence for a Hilbert space $\mathcal{H}$. Moreover, let
$\mathcal{U}\in\mathcal{B}(\ell^2)$ be a bijective operator. Then $\mathbf{M}_{\mathcal{U},\{g_{k}\},\{f_{k}\}}$ is invertible if and only if
$\{f_{k}\}_{k=1}^{\infty}$ is a Riesz basis.
\end{cor}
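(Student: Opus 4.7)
The plan is to reduce the invertibility question about $\mathbf{M}_{\mathcal{U},\{g_k\},\{f_k\}}$ to an invertibility question about $\mathcal{C}_{\{f_k\}}$ alone, after which the conclusion follows from the standard characterization of Riesz bases in terms of the analysis operator.

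The key observation is that under the hypotheses, both $\mathcal{D}_{\{g_k\}}\colon \ell^2 \to \mathcal{H}$ and $\mathcal{U}\colon \ell^2 \to \ell^2$ are bounded bijections: the first because $\{g_k\}$ is a Riesz basis (so $\mathcal{D}_{\{g_k\}}$ is a Banach-space isomorphism by definition \eqref{eqn:Riesz intro}), and the second by assumption (with bounded inverse supplied by the bounded inverse theorem). Hence the composition $T := \mathcal{D}_{\{g_k\}} \circ \mathcal{U}\colon \ell^2 \to \mathcal{H}$ is a bounded bijection, and we can write
\begin{equation*}
\mathbf{M}_{\mathcal{U},\{g_k\},\{f_k\}} \;=\; T \circ \mathcal{C}_{\{f_k\}}.
\end{equation*}
Since $T$ is invertible with bounded inverse, the operator $\mathbf{M}_{\mathcal{U},\{g_k\},\{f_k\}}$ is invertible on $\mathcal{H}$ if and only if $\mathcal{C}_{\{f_k\}}\colon \mathcal{H} \to \ell^2$ is invertible (one direction by composing with $T^{-1}$, the other by composing with $T$).

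It remains to translate "$\mathcal{C}_{\{f_k\}}$ bijective" into "$\{f_k\}$ is a Riesz basis." Since $\{f_k\}$ is Bessel, $\mathcal{C}_{\{f_k\}}$ is bounded and $\mathcal{D}_{\{f_k\}} = \mathcal{C}^{*}_{\{f_k\}}$ by Proposition \ref{Bessel bounds}. A bounded operator between Hilbert spaces is a bijection if and only if its adjoint is, so $\mathcal{C}_{\{f_k\}}$ being invertible is equivalent to $\mathcal{D}_{\{f_k\}}\colon \ell^2 \to \mathcal{H}$ being a bounded bijection with bounded inverse, which is exactly the two-sided inequality \eqref{eqn:Riesz intro} defining a Riesz basis (the density of $\overline{\mathrm{span}}\{f_k\} = \mathcal{H}$ follows from surjectivity of $\mathcal{D}_{\{f_k\}}$). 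This yields both directions of the stated equivalence.

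The plan has no real obstacle beyond bookkeeping: one could equivalently derive the "only if" direction via Proposition \ref{lower frame condition} (which gives the lower frame bound for $\{f_k\}$) together with a verification that $\mathcal{D}_{\{f_k\}}$ is injective, but the composition argument above is more direct and simultaneously furnishes the "if" direction, which otherwise would be an instance of Proposition \ref{nec and suf con for inver}.
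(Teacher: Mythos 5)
Your proof is correct, but it takes a more unified route than the paper. The paper proves the two directions separately: for the ``if'' direction it invokes Proposition \ref{nec and suf con for inver} (which produces the explicit inverse $\mathbf{M}_{\mathcal{U}^{-1},\{\tilde f_k\},\{\tilde g_k\}}$), and for the ``only if'' direction it first applies Proposition \ref{lower frame condition} to conclude that $\{f_k\}$ is a frame, then observes that invertibility of $\mathbf{M}$ forces $\mathcal{C}_{\{f_k\}}$ to be surjective, and finally cites the frame-theoretic characterization (Christensen, Theorem 7.1.1) that a frame with surjective analysis operator is a Riesz basis. You instead factor $\mathbf{M}_{\mathcal{U},\{g_k\},\{f_k\}} = T \circ \mathcal{C}_{\{f_k\}}$ with $T=\mathcal{D}_{\{g_k\}}\circ\mathcal{U}$ a bounded bijection, reduce both directions at once to bijectivity of $\mathcal{C}_{\{f_k\}}$, and then translate that into the Riesz basis property via the adjoint $\mathcal{D}_{\{f_k\}}=\mathcal{C}_{\{f_k\}}^{*}$ and inequality \eqref{eqn:Riesz intro}; all steps check out (the lower bound in \eqref{eqn:Riesz intro} comes from the open mapping theorem applied to $\mathcal{D}_{\{f_k\}}$, and density of the span from its surjectivity). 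What the paper's route buys is reuse of already-established machinery and, in the ``if'' direction, an explicit formula for $\mathbf{M}^{-1}$; what yours buys is a shorter, self-contained argument that makes transparent that the entire content of the corollary is the invertibility of $T$, so that nothing beyond standard Hilbert space facts is needed.
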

\begin{proof}
If $\{f_{k}\}_{k=1}^{\infty}$ is a Riesz basis, it is obvious, by Proposition \ref{nec and suf con for inver}, that $\mathbf{M}_{\mathcal{U},\{g_{k}\},\{f_{k}\}}$ is invertible.
Conversely, suppose $\mathbf{M}_{\mathcal{U},\{g_{k}\},\{f_{k}\}}$ is invertible. Then by Proposition \ref{lower frame condition}, $\{f_{k}\}_{k=1}^{\infty}$ is a frame for
$\mathcal{H}$. Furthermore, invertibility of $\mathbf{M}_{\mathcal{U},\{g_{k}\},\{f_{k}\}} = \mathcal{D}_{\{g_k\}}o\mathcal{U}o~\mathcal{C}_{\{f_k\}}$ implies that the analysis operator $\mathcal{C}_{\{f_k\}}$ is surjective. The result now follows by \cite[Theorem 7.1.1]{Christensen-2016}.
\end{proof}
Our next result is analogous to \cite[Proposition 4.2]{Stoeva-2012} which gives
sufficient conditions for the invertibility of generalized frame multipliers.
\begin{prop}
Suppose that $\{f_{k}\}_{k=1}^{\infty}$ is a frame for $\mathcal{H}$ with bounds $A,B$ and $\{g_{k}\}_{k=1}^{\infty}$ is an arbitrary sequence in $\mathcal{H}$.
Let $0<\mu<\dfrac{1}{B}\Big(\dfrac{AB^2-A^2B}{A^2+B^2}\Big)^2$ be such that for each $f\in \mathcal{H}$,
$$\sum_k |\langle f, f_k - g_k\rangle|^2\leq \mu \|f\|^2.$$
Moreover let $\|\mathcal{U}-Id\|< \dfrac{A^2}{B^2}$, where $\mathcal{U}$ is a non-zero bounded linear operator on $\ell^2$.
Then the following holds.
\begin{enumerate}
  \item $\{g_{k}\}_{k=1}^{\infty}$ is a frame for $\mathcal{H}$.
  \item $\mathbf{M}_{\mathcal{U},\{f_k\},\{g_k\}}$ is invertible.
\end{enumerate}
\end{prop}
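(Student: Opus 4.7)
The plan is to prove (1) first using standard frame perturbation, and then to prove (2) by comparing $\mathbf{M}_{\mathcal{U},\{f_k\},\{g_k\}}$ to the (invertible) frame operator $\mathcal{S}_{\{f_k\}}$ of $\{f_k\}$ and applying a Neumann-type argument.

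For (1), I would first observe that the hypothesis rewrites as $\|\mathcal{C}_{\{f_k-g_k\}}\| \le \sqrt{\mu}$, so that $\|\mathcal{C}_{\{f_k\}} - \mathcal{C}_{\{g_k\}}\| \le \sqrt{\mu}$. A quick algebraic check shows that the allowed upper bound $\frac{1}{B}\bigl(\frac{AB^2-A^2B}{A^2+B^2}\bigr)^2 = \frac{A^2B(B-A)^2}{(A^2+B^2)^2}$ is strictly less than $A$ (expand to see $(A^2+B^2)^2 - AB(B-A)^2 = (B-A)^2(A^2+AB+B^2)+4A^2B^2 > 0$), so $\sqrt{\mu} < \sqrt{A} \le \sqrt{B}$. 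The classical Casazza--Christensen-type perturbation inequality then yields
\[
(\sqrt{A}-\sqrt{\mu})^{2}\|f\|^{2} \le \sum_k |\langle f,g_k\rangle|^{2} \le (\sqrt{B}+\sqrt{\mu})^{2}\|f\|^{2},
\]
proving $\{g_k\}_{k=1}^{\infty}$ is a frame and, along the way, that $\|\mathcal{C}_{\{g_k\}}\| \le \sqrt{B}+\sqrt{\mu}$.

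For (2), I would compare $\mathbf{M}:=\mathbf{M}_{\mathcal{U},\{f_k\},\{g_k\}} = \mathcal{D}_{\{f_k\}}\,\mathcal{U}\,\mathcal{C}_{\{g_k\}}$ with $\mathcal{S}_{\{f_k\}}=\mathcal{D}_{\{f_k\}}\mathcal{C}_{\{f_k\}}$. Writing
\[
\mathcal{S}_{\{f_k\}} - \mathbf{M} = \mathcal{D}_{\{f_k\}}\Bigl[(\mathcal{C}_{\{f_k\}}-\mathcal{C}_{\{g_k\}}) + (Id-\mathcal{U})\,\mathcal{C}_{\{g_k\}}\Bigr],
\]
and using $\|\mathcal{D}_{\{f_k\}}\|\le\sqrt{B}$, $\|\mathcal{C}_{\{f_k\}}-\mathcal{C}_{\{g_k\}}\|\le\sqrt{\mu}$, $\|Id-\mathcal{U}\| < A^{2}/B^{2}$ and the bound $\|\mathcal{C}_{\{g_k\}}\|\le \sqrt{B}+\sqrt{\mu}$ from (1), I obtain
\[
\|\mathcal{S}_{\{f_k\}}-\mathbf{M}\|_{op} < \sqrt{B\mu} + \frac{A^{2}}{B} + \frac{A^{2}\sqrt{\mu}}{B^{3/2}} = \frac{A^{2}}{B} + \sqrt{\mu}\cdot\frac{A^{2}+B^{2}}{B^{3/2}}.
\]
The upper bound on $\mu$ is precisely designed so that $\sqrt{\mu}\cdot(A^{2}+B^{2})/B^{3/2} < A(B-A)/B$; substituting gives $\|\mathcal{S}_{\{f_k\}}-\mathbf{M}\|_{op} < A \le \|\mathcal{S}_{\{f_k\}}^{-1}\|^{-1}$. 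The standard Neumann series argument (write $\mathbf{M} = \mathcal{S}_{\{f_k\}}\bigl(Id - \mathcal{S}_{\{f_k\}}^{-1}(\mathcal{S}_{\{f_k\}}-\mathbf{M})\bigr)$) then yields the invertibility of $\mathbf{M}$.

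The main obstacle is purely book-keeping: verifying that the unusual constant $\frac{1}{B}(\frac{AB^{2}-A^{2}B}{A^{2}+B^{2}})^{2}$ is exactly the sharp threshold that forces $\|\mathcal{S}_{\{f_k\}}-\mathbf{M}\|_{op}<A$ when combined with the assumption $\|Id-\mathcal{U}\|<A^{2}/B^{2}$. Once this algebraic identification is made, both claims follow from perturbation bounds and Neumann series with no further subtlety.
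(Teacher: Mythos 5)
Your proof is correct and follows essentially the same route as the paper: part (1) is the standard Casazza--Christensen perturbation result (the paper simply cites \cite[Corollary 22.1.5]{Christensen-2016}), and part (2) is a Neumann-type comparison of $\mathbf{M}_{\mathcal{U},\{f_k\},\{g_k\}}$ with the frame operator $\mathcal{S}_{\{f_k\}}$. The only cosmetic difference is that the paper splits the comparison into two steps through the intermediate multiplier $\mathbf{M}_{\mathcal{U},\{f_k\},\{f_k\}}$ and invokes \cite[Proposition 2.2]{Stoeva-2012}, whereas you estimate $\|\mathcal{S}_{\{f_k\}}-\mathbf{M}\|_{op}$ in one decomposition and conclude directly; both yield the identical bound $\frac{A^{2}}{B}+\sqrt{\mu}\,\frac{A^{2}+B^{2}}{B^{3/2}}<A\leq\|\mathcal{S}_{\{f_k\}}^{-1}\|^{-1}$.
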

\begin{proof}
(1) It follows from \cite[Corollary 22.1.5]{Christensen-2016}.\\
(2) Let $\mathcal{S}$ be the frame operator associated with $\{f_{k}\}_{k=1}^{\infty}$. Note first that for every $f\in\mathcal{H}$,
\begin{equation*}
\begin{split}
\|\mathbf{M}_{\mathcal{U},\{f_k\},\{f_k\}}(f)-\mathcal{S}(f)\|
& = \|\mathbf{M}_{\mathcal{U},\{f_k\},\{f_k\}}(f)-\mathbf{M}_{Id,\{f_k\},\{f_k\}}(f)\|\\
& = \|\mathbf{M}_{\mathcal{U}-Id,\{f_k\},\{f_k\}}(f)\|\\
& \leq \|\mathcal{U}-Id\|\|\mathcal{D}_{\{f_k\}}\|\|\mathcal{C}_{\{f_k\}}\|\|f\|\\
& < (A^2/B)\|f\|.\\
\end{split}
\end{equation*}
Since $\dfrac{A^{2}}{B}\leq\dfrac{1}{\|\mathcal{S}^{-1}\|}$, we conclude, by \cite[Proposition 2.2]{Stoeva-2012}, that $\mathbf{M}_{\mathcal{U},\{f_k\},\{f_k\}}$ is invertible.
Now
\begin{equation*}
\begin{split}
\|\mathbf{M}_{\mathcal{U},\{f_k\},\{g_k\}}(f)-\mathbf{M}_{\mathcal{U},\{f_k\},\{f_k\}}(f)\|
& = \|\mathbf{M}_{\mathcal{U},\{f_k\},\{g_k\}-\{f_k\}}(f)\|\\
& \leq \|\mathcal{U}\| \|\mathcal{D}_{\{f_k\}}\|\|\mathcal{C}_{\{g_k\}-\{f_k\}}(f)\|\\
& \leq \|\mathcal{U}\|\sqrt{B}\sqrt{\mu}\|f\|.\\
\end{split}
\end{equation*}
To complete the proof, it is enough to check that $\|\mathcal{U}\|\sqrt{B}\sqrt{\mu} <\dfrac{1}{\|\mathbf{M}^{-1}_{\mathcal{U},\{f_k\},\{f_k\}}\|}$. But
\begin{equation*}
\|\mathcal{U}\|\sqrt{B}\sqrt{\mu}<\dfrac{A^2+B^2}{B^2}\sqrt{B}\sqrt{\mu}<A-\dfrac{A^2}{B}\leq \dfrac{1}{\|S^{-1}\|}-\dfrac{A^2}{B}\leq
\dfrac{1}{\|\mathbf{M}^{-1}_{\mathcal{U},\{f_k\},\{f_k\}}\|},
\end{equation*}
where the last inequality follows from the second part of \cite[Proposition 2.2]{Stoeva-2012}.
\end{proof}
\section{Perturbation of generalized Multipliers}
A generalized Bessel multiplier clearly depends on the chosen symbol, analysis and synthesis sequence. We check some states of convergence. Before
presenting our results, we borrow the following
essential definition from \cite{Balazs-2007}.
\begin{dfn}
Let $\{f_k\}_{k}$ and $\{f_{k}^{(l)}\}_{k}$ be sequences of elements of $\mathcal{H}$, for all $l\in\mathbb{N}$. The sequences $\{f_{k}^{(l)}\}_{k}$
are said to converges to $\{f_k\}_{k}$ in an $\ell^p$ sense, denoted by $\{f_{k}^{(l)}\}\overset{\parallel .\parallel_{\ell^p}}{\longrightarrow}\{f_{k}\}$,
if for all $\epsilon>0$, there exists $N>0$ such that
$\big(\sum_{k}\|{f_{k}^{(l)}}-{f_{k}}\|_{\mathcal{H}}^{p}\big)^{\frac{1}{p}}<\epsilon$ for all $l\geq N$.
\end{dfn}
\begin{prop}
Let $\mathcal{U}\in{\mathcal{B}(\ell^2)}$ and $\mathbf{M}_{\mathcal{U},\{g_{k}\},\{f_{k}\}}$ be the generalized Bessel multiplier
for the Bessel sequences $\{f_{k}\}_{k=1}^{\infty}$ and $\{g_{k}\}_{k=1}^{\infty}$ with Bessel bounds $B$ and $B'$, respectively. Then for the indexed sequences $\{\mathcal{U}^{(l)}\}$ in $\mathcal{B}(\ell^2)$ and Bessel sequences $\{f_{k}^{(l)}\}$ and $\{g_{k}^{(l)}\}$ we have:
\begin{enumerate}
  \item
 (a) If $\mathcal{U}^{(l)}\overset{\parallel .\parallel_{op}}{\longrightarrow}\mathcal{U}$, then $\parallel\mathbf{M}_{\mathcal{U}^{(l)},\{g_{k}\},\{f_{k}\}}-\mathbf{M}_{\mathcal{U},\{g_{k}\},\{f_{k}\}}\parallel_{op}\rightarrow 0$.\\
 (b) If $\mathcal{U}^{(l)}\overset{\parallel .\parallel_{1}}{\longrightarrow}\mathcal{U}$, then $\parallel\mathbf{M}_{\mathcal{U}^{(l)},\{g_{k}\},\{f_{k}\}}-\mathbf{M}_{\mathcal{U},\{g_{k}\},\{f_{k}\}}\parallel_{1}\rightarrow 0$.\\
 (c) If $\mathcal{U}^{(l)}\overset{\parallel .\parallel_{2}}{\longrightarrow}\mathcal{U}$, then $\parallel\mathbf{M}_{\mathcal{U}^{(l)},\{g_{k}\},\{f_{k}\}}-\mathbf{M}_{\mathcal{U},\{g_{k}\},\{f_{k}\}}\parallel_{2}\rightarrow 0$.\\
   \item
   (a)If $\{g_{k}^{(l)}\}\overset{\parallel .\parallel_{\ell^1}}{\longrightarrow}\{g_{k}\}$, then
   $\parallel\mathbf{M}_{\mathcal{U},\{g_{k}^{(l)}\},\{f_{k}\}}-\mathbf{M}_{\mathcal{U},\{g_{k}\},\{f_{k}\}}\parallel_{1}\rightarrow 0$.\\
   (b)If $\{g_{k}^{(l)}\}\overset{\parallel .\parallel_{\ell^2}}{\longrightarrow}\{g_{k}\}$, then
   $\parallel\mathbf{M}_{\mathcal{U},\{g_{k}^{(l)}\},\{f_{k}\}}-\mathbf{M}_{\mathcal{U},\{g_{k}\},\{f_{k}\}}\parallel_{2}\rightarrow 0$.
   \item For Bessel sequence $\{f_{k}^{(l)}\}$ converging to $\{f_{k}\}$, similar properties as in (2) satisfy.
\end{enumerate}
\end{prop}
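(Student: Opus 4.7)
The plan is to reduce every assertion to the norm estimates already proved in Theorem 3.2, using the fact that the multiplier depends linearly on each of its three ingredients (the symbol, the synthesis sequence, and the analysis sequence).

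For part (1), I would exploit linearity in the symbol: for fixed Bessel sequences $\{f_k\}$ and $\{g_k\}$ the map $\mathcal{V}\mapsto \mathbf{M}_{\mathcal{V},\{g_k\},\{f_k\}}=\mathcal{D}_{\{g_k\}}\circ\mathcal{V}\circ\mathcal{C}_{\{f_k\}}$ is linear in $\mathcal{V}$, so
\[
\mathbf{M}_{\mathcal{U}^{(l)},\{g_k\},\{f_k\}}-\mathbf{M}_{\mathcal{U},\{g_k\},\{f_k\}}
=\mathbf{M}_{\mathcal{U}^{(l)}-\mathcal{U},\{g_k\},\{f_k\}}.
\]
Then (1a) follows from Theorem \ref{main theorem}(1), while (1b) and (1c) follow from Theorem \ref{main theorem}(4) with $p=1$ and $p=2$, applied to the symbol $\mathcal{U}^{(l)}-\mathcal{U}$.

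For parts (2) and (3), I would similarly use linearity in the synthesis/analysis sequence together with the following auxiliary observation: if $\{h_k\}\subset\mathcal{H}$ satisfies $\sum_k\|h_k\|<\infty$, then, writing the synthesis operator as $\mathcal{D}_{\{h_k\}}=\sum_k h_k\otimes e_k$ with $\{e_k\}$ the canonical basis of $\ell^2$, the triangle inequality in trace norm gives $\|\mathcal{D}_{\{h_k\}}\|_1\leq\sum_k\|h_k\|$; and if $\sum_k\|h_k\|^2<\infty$, then $\|\mathcal{D}_{\{h_k\}}\|_2^{2}=\sum_k\|\mathcal{D}_{\{h_k\}}e_k\|^2=\sum_k\|h_k\|^2$. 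With $h_k:=g_k^{(l)}-g_k$, Bessel-ness is automatic (Bessel bound $\le\sum_k\|h_k\|^2$) and the difference in (2) equals $\mathbf{M}_{\mathcal{U},\{g_k^{(l)}-g_k\},\{f_k\}}=\mathcal{D}_{\{h_k\}}\circ\mathcal{U}\circ\mathcal{C}_{\{f_k\}}$. Invoking the ideal property of $\mathcal{S}_p$ recalled in Section 2 yields
\[
\|\mathbf{M}_{\mathcal{U},\{g_k^{(l)}\},\{f_k\}}-\mathbf{M}_{\mathcal{U},\{g_k\},\{f_k\}}\|_p
\leq\|\mathcal{D}_{\{h_k\}}\|_p\,\|\mathcal{U}\|_{op}\,\sqrt{B},
\]
so the right-hand side tends to $0$ by hypothesis in the cases $p=1$ and $p=2$. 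Part (3) is handled identically by replacing the synthesis operator by the analysis operator $\mathcal{C}_{\{h_k\}}=\sum_k e_k\otimes h_k$, which shares the same Schatten-norm bounds.

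The one step that is not purely formal is the verification that $\mathcal{D}_{\{h_k\}}\in\mathcal{S}_1$ when $\{h_k\}\in\ell^1(\mathbb{N},\mathcal{H})$ and $\mathcal{D}_{\{h_k\}}\in\mathcal{S}_2$ when $\{h_k\}\in\ell^2(\mathbb{N},\mathcal{H})$, together with the quantitative bounds $\|\mathcal{D}_{\{h_k\}}\|_1\le\sum_k\|h_k\|$ and $\|\mathcal{D}_{\{h_k\}}\|_2=(\sum_k\|h_k\|^2)^{1/2}$; I expect this to be the main technical point, though it follows quickly from the rank-one decomposition above and the submultiplicativity $\|a\otimes b\|_p=\|a\|\|b\|$. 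Once this is in place, the three parts become one-line consequences of Theorem \ref{main theorem}, so the proposition is essentially an organized collection of applications of the bounds already established.
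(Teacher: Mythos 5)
Your proposal is correct and follows the same overall route as the paper: both arguments reduce each statement to linearity of $\mathbf{M}_{\mathcal{U},\{g_k\},\{f_k\}}$ in the relevant parameter, the ideal property of $\mathcal{S}_p$ (so that $\|\mathcal{D}\circ\mathcal{V}\circ\mathcal{C}\|_p\le\|\mathcal{D}\|\,\|\mathcal{V}\|_p\,\|\mathcal{C}\|$ with the $p$-norm placed on whichever factor is perturbed), and a Schatten-norm estimate for the difference of synthesis operators. Part (1) is in both cases an immediate application of Theorem \ref{main theorem}. The one place you diverge is the trace-norm bound in 2(a): the paper derives $\|\mathcal{D}_{\{g_k^{(l)}\}}-\mathcal{D}_{\{g_k\}}\|_1\le\sum_k\|g_k^{(l)}-g_k\|$ by computing $\sum_n\langle|\mathcal{D}_{\{g_k^{(l)}-g_k\}}|e_n,e_n\rangle$ through a polar decomposition, whereas you expand $\mathcal{D}_{\{h_k\}}=\sum_k h_k\otimes e_k$ and use the triangle inequality together with $\|h_k\otimes e_k\|_1=\|h_k\|$. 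The two estimates are equivalent, but your version has the minor advantage of establishing along the way that $\mathcal{D}_{\{h_k\}}$ is indeed trace class (as an absolutely convergent series in $\mathcal{S}_1$), a fact the paper's trace computation tacitly presupposes before it can invoke the formula $\|\cdot\|_1=\sum_n\langle|\cdot|e_n,e_n\rangle$. The Hilbert--Schmidt case 2(b) and part (3) are handled identically in both arguments.
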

\begin{proof}
(1) (b)This is a direct result of Theorem \ref{main theorem} as follows:
\begin{equation*}
\begin{split}
\parallel\mathbf{M}_{\mathcal{U}^{(l)},\{g_{k}\},\{f_{k}\}}-\mathbf{M}_{\mathcal{U},\{g_{k}\},\{f_{k}\}}\parallel_{1}
& =\parallel\mathbf{M}_{(\mathcal{U}^{(l)}-\mathcal{U}),\{g_{k}\},\{f_{k}\}}\parallel_{1}\\
& =\parallel\mathcal{D}_{\{g_k\}}o~(\mathcal{U}^{(l)}-\mathcal{U})~o~\mathcal{C}_{\{f_k\}}\parallel_{1}\\
& \leq\parallel\mathcal{D}\parallel_{op}~\parallel\mathcal{U}^{(l)}-\mathcal{U}\parallel_{1}~\parallel\mathcal{C}\parallel_{op}\\
& \leq\sqrt{BB'}\epsilon.
\end{split}
\end{equation*}
(a) and (c) can be proved similarly.\\
(2)
(a) By the polar decomposition theorem for synthesis operator $\mathcal{D}$, there exists partial isometry $\mathcal{V}$ such that $|\mathcal{D}|=\mathcal{V}\mathcal{D}$. So
\begin{equation*}
\begin{split}
\|\mathcal{D}_{\{g_{k}^{(l)}\}}-\mathcal{D}_{\{g_{k}\}}\|_{1}
& =\sum_{n=1}^{\infty}\langle |\mathcal{D}_{\{{g_{k}^{(l)}}-{g_{k}\}}}|e_n,e_n\rangle\\
& =\sum_{n=1}^{\infty}\langle (\mathcal{D}_{\{{g_{k}^{(l)}}-{g_{k}\}}})e_n,\mathcal{V}e_n\rangle\\
& =\sum_{n=1}^{\infty}\langle {g_{n}^{(l)}}-{g_{n}},\mathcal{V}e_n\rangle\\
& \leq\sum_{n=1}^{\infty}|\langle {g_{n}^{(l)}}-{g_{n}},\mathcal{V}e_n\rangle |\\
& \leq \|\mathcal{V}\|\sum_{n=1}^{\infty}|{g_{n}^{(l)}}-{g_{n}}|\leq\epsilon,\\
\end{split}
\end{equation*}
hence
\begin{equation*}
\begin{split}
\|\mathbf{M}_{\mathcal{U},\{g_{k}^{(l)}\},\{f_{k}\}}-\mathbf{M}_{\mathcal{U},\{g_{k}\},\{f_{k}\}}\|_{1}
& =\|(\mathcal{D}_{\{g_{k}^{(l)}\}}-\mathcal{D}_{\{g_{k}\}})~o~\mathcal{U}~o~\mathcal{C}_{\{f_k\}}\|_{1}\\
&\leq\|\mathcal{D}_{\{g_{k}^{(l)}\}}-\mathcal{D}_{\{g_{k}\}}\|_{1}~\|\mathcal{U}\|_{op}~\|\mathcal{C}_{\{f_k\}}\|_{op}\\
&\leq\|\mathcal{U}\|_{op}\sqrt{B}\epsilon.
\end{split}
\end{equation*}
(b)
\begin{equation*}
\|\mathcal{D}_{\{g_{k}^{(l)}\}}-\mathcal{D}_{\{g_{k}\}}\|_{2}
=(\sum_{n=1}^{\infty}\|(\mathcal{D}_{\{ g_{k}^{(l)}-{g_k}\}}) e_n\|^2)^{1/2}
=(\sum_{n=1}^{\infty}\| g_{n}^{(l)}-{g_n}\|^2)^{1/2}
\leq\epsilon,
\end{equation*}
hence
\begin{equation*}
\begin{split}
\|\mathbf{M}_{\mathcal{U},\{g_{k}^{(l)}\},\{f_{k}\}}-\mathbf{M}_{\mathcal{U},\{g_{k}\},\{f_{k}\}}\|_{2}
& =\|(\mathcal{D}_{\{g_{k}^{(l)}\}}-\mathcal{D}_{\{g_{k}\}})~o~\mathcal{U}~o~\mathcal{C}_{\{f_k\}}\|_{2}\\
&\leq\|\mathcal{D}_{\{g_{k}^{(l)}\}}-\mathcal{D}_{\{g_{k}\}}\|_{2}~\|\mathcal{U}\|_{op}~\|\mathcal{C}_{\{f_k\}}\|_{op}\\
&\leq\|\mathcal{U}\|_{op}\sqrt{B}\epsilon.
\end{split}
\end{equation*}
(3) Use a corresponding argumentation as in (2).
\end{proof}
\section{Conclusion}
In this paper, we discussed the generalized multipliers, which is the composition of analysis operator, an operator $\mathcal{U}\in\mathcal{B}(\ell^2)$ and
synthesis operator. At first, we have started with collecting some important properties of these operators. We found that when the symbol belongs to certain
operator spaces, such as bonded, positive, compact or Hilbert-Schmidt operators, the associated generalized Bessel multiplier belongs to the same
operator spaces, in the special sense. Also, we have given the lower and upper bounds for the generalized Riesz multipliers.
In the sequel, we were interested in the investigation of the invertibility of generalized multipliers. As another result, we showed
that the generalized multiplier depends continuously on the symbol and on the involved sequences. Moreover, we gave some examples to support our results.
\section{Acknowledgement}
The authors would like to thank the referees for their valuable
comments and suggestions which improved the manuscript.

\end{document}